\documentclass[3p, number, sort&compress, times, lefttitle, dvipsnames]{elsarticle}

\journal{Computer Methods in Applied Mechanics and Engineering}

\usepackage{hyphenat}
\usepackage{subcaption}
\usepackage{placeins}
\usepackage{multirow}
\usepackage{algorithm}
\usepackage{algpseudocode}
\usepackage{graphicx}
\graphicspath{ {./figs/} }
\usepackage{tikz}
\usetikzlibrary{arrows.meta,calc,patterns}
\definecolor{pal29}{rgb}{0.890, 0.953, 0.984}
\definecolor{pal58}{rgb}{0.184, 0.310, 0.455}

\usepackage{amsmath,amsfonts}
\renewcommand{\Re}{\mathbb{R}}

\usepackage{amsthm}
\theoremstyle{plain}
\newtheorem*{proposition}{Proposition}

\theoremstyle{definition}
\usepackage{bm}
\newcommand{\vm}[1]{\bm{#1}}
\newcommand{\vx}{\vm{x}}

\usepackage{mathtools}

\usepackage[colorlinks=true]{hyperref}

\newcommand{\tref}[1]{Table~\ref{#1}}
\newcommand{\fref}[1]{Fig.~\ref{#1}}
\newcommand{\sref}[1]{Section~\ref{#1}}

\begin{document}

\title{Scaled boundary cubature scheme in higher dimensions: integration over polytopes and curved regions}

\author[1]{Eric B.\ Chin\corref{cor1}}
\ead{chin23@llnl.gov}

\author[2]{N.\ Sukumar}

\cortext[cor1]{Corresponding author}

\address[1]{Lawrence Livermore National Laboratory, 7000 East Avenue, Livermore, CA 94550, USA}

\address[2]{Department of Civil and Environmental Engineering, University of California, Davis, CA 95616, USA}

\begin{abstract}
We extend the scaled boundary cubature (SBC) scheme from planar regions to higher-dimensional regions described by
oriented boundary patches.  The resulting parametrization transforms integrals over compact regions in $\Re^d$ into
sums of integrals over the boundary-patch parameter domains and a radial coordinate.  In three dimensions, this yields a
direct volume-integration rule for solids bounded by affine faces, triangular or tensor-product surface patches,
B-spline patches, NURBS patches, and combinations of curved and affine boundary representations.  For affine
polytopes, recursive application of the scaled boundary map yields nested tensor-product rules over simplex sectors; in
three dimensions, these reduce to tetrahedral-sector rules that apply equally to convex and nonconvex oriented
polyhedra.  We also develop transformations for weakly
singular integrands.  Placing the scaling center at a point singularity exposes the radial power cancelled by the
Jacobian, while generalized radial scalings and Gauss--Jacobi quadrature handle fractional powers.  A transverse
scaled-boundary map provides the analogous construction for affine singular sets, with straight-line examples in three
dimensions.  Numerical examples verify polynomial exactness on affine polyhedra and a four-dimensional tesseract,
rapid convergence on curved
B-spline and NURBS solids, and
the expected convergence improvements for point and line singularities.  Near-boundary singularity tests also identify
when additional patch subdivision or patch-parameter transformations are required.
\end{abstract}

\begin{keyword}
scaled boundary cubature \sep polytopes \sep parametric surfaces \sep NURBS \sep weakly singular functions \sep
Gauss--Jacobi quadrature
\end{keyword}

\maketitle

\section{Introduction}\label{sec:intro}

Computational mechanics discretizations increasingly require the integration of functions over domains that are not
simplices or tensor\hyp{}product cells.  Such domains arise in finite element and discontinuous Galerkin methods on
polygonal and polyhedral meshes~\cite{Cangiani:2014:HVD}, in the virtual element method
(VEM)~\cite{BeiraodaVeiga:2013:BPV,BeiraodaVeiga:2019:TVE}, in extended, embedded, and unfitted finite element methods
for interfaces, cracks, and immersed
boundaries~\cite{Moes:1999:AFE,Sukumar:2015:EFE,Burman:2014:CDG,Schillinger:2015:TFC}, in isogeometric analysis with
trimmed or boundary\hyp{}representation geometries~\cite{Hughes:2005:IAC,Marussig:2018:ICI,Chen:2016:ANB}, and in mortar
and contact formulations~\cite{Puso:2004:AMS,Hesch:2011:TTD,Farah:2015:SBE}.  Related integration tasks also occur
outside Galerkin discretizations, for example in computer graphics simulations with nonconvex
bodies~\cite{Guendelman:2003:NRB} and in computer\hyp{}aided design when computing surface or volumetric moments from
parametric geometry~\cite{Krishnamurthy:2011:AGA}.  The integration problem in these settings is not limited to
high\hyp{}order polynomial cubature.  The boundary may be nonconvex, may be represented by polynomial or rational
parametric patches, or may be produced by intersections and trimming operations; the integrand may also contain
nonpolynomial geometry factors or weak singularities.  In this setting, the value of SBC is not only that it generates
cubature points and weights, but also that it provides a constructive parametrization of the enclosed region from its
oriented boundary representation.

In Chin and Sukumar~\cite{Chin:2021:SBC}, we introduced the scaled boundary cubature (SBC) scheme for planar regions
bounded by affine and curved edges.  The central observation was that the scaled boundary parametrization produces a
tensor\hyp{}product integral over a radial coordinate and a boundary parameter.  This viewpoint also showed that the
homogeneous numerical integration (HNI) method~\cite{Lasserre:1998:ICP,Lasserre:1999:IHF,Chin:2015:NIH} is recovered as
a special case when the integrand is homogeneous.  The present paper extends that idea beyond planar regions: we develop
the higher\hyp{}dimensional construction, specialize it to three\hyp{}dimensional regions bounded by parametric
surfaces, and introduce transformations for weakly singular integrands.

Several established strategies are available for integration over nonstandard domains.  A direct approach is to
partition the domain into triangles, tetrahedra, or curved subcells and then apply standard quadrature rules on each
piece~\cite{Moes:1999:AFE,Sevilla:2008:NEF,Sevilla:2011:TNE,Fries:2017:HOM,Artioli:2020:ACP}.  This strategy is robust
and familiar, but for complex, nonconvex, or curved domains the geometry generation can dominate the cost, and
high\hyp{}order rules over many subcells can produce far more points than are needed by the integrand.
Moment\hyp{}fitting and optimized rules provide more compact formulas; for example, generalized Gaussian rules have been
constructed for arbitrary polygons with interior nodes and positive weights~\cite{Mousavi:2010:GGQ}, and efficient rules
in two and higher dimensions can be generated by numerical optimization~\cite{Xiao:2009:ANA}.  Moment\hyp{}fitting
variants have also been developed for cut cells and large deformation settings~\cite{Duster:2020:SEM,Garhuom:2022:NMF}.
These rules are attractive when they are available, but their construction requires solving nonlinear systems and
commonly starts from an existing rule or partition.  One moment\hyp{}fitting strategy is to compress a dense candidate
rule or discrete measure into a smaller weighted rule that preserves the moments of a chosen polynomial
space~\cite{Sommariva:2015:CMD,Bauman:2020:CAC}.  Such compressed rules are attractive for nonlinear computations in
which the same integration rule is reused many times over an element, because the setup cost can be amortized while
subsequent residual, tangent, or constitutive evaluations use fewer points.

Another class of methods uses Green's theorem, Stokes's theorem, or the divergence theorem to reduce the dimension of
the integral.  Product Gauss and Gauss--Green cubature rules use Green's formula for polygons and curvilinear
geometries, and they have also been used for moment computation over arbitrary
geometries~\cite{Sommariva:2007:PGC,Sommariva:2009:GGC,Gunderman:2021:SMF,Gunderman:2021:HAM}.  These methods avoid
volume meshing and scale with the number of boundary pieces, although the resulting cubature points need not lie inside
the physical domain and the implementation depends on suitable antiderivative or boundary\hyp{}integral constructions.
SBC instead keeps the domain integral as a mapped tensor\hyp{}product integral, which can be direct to apply when the
boundary is represented by CAD or simulation geometry such as NURBS, B\'{e}zier patches, segmented overlap regions, or
nonconvex polytopes.  The HNI method~\cite{Lasserre:1998:ICP,Lasserre:1999:IHF,Chin:2015:NIH,Chin:2020:AEM} is also
based on dimension reduction and is particularly efficient for homogeneous functions and polynomials after decomposition
into homogeneous terms.  Its restriction to homogeneous structure, however, makes general nonpolynomial functions and
black\hyp{}box integrands less direct.  For implicitly defined, trimmed, and curved polyhedral geometries,
high\hyp{}order quadrature schemes based on local height functions, recursive decompositions, folded decompositions,
patchwise surface quadrature, or transport corrections have been
developed~\cite{Saye:2015:HOQ,Scholz:2019:NIT,Scholz:2021:UHO,Antolin:2022:RNI,Loibl:2023:PQT}; these methods address
important cases in unfitted and isogeometric analysis but are tied to their particular geometry descriptions.  Weakly
singular integration is a related challenge, and classical Duffy\hyp{}type and Gauss--Jacobi transformations provide
important tools for vertex and simplex
singularities~\cite{Duffy:1982:QOP,Mousavi:2010:GDT,Chernov:2012:ECG,Lv:2019:ASD}.

SBC has recently been used to compute weak\hyp{}form integrals in several virtual element formulations over polygonal
and polyhedral meshes, including curved elements and enriched formulations with weakly singular
functions~\cite{Chin:2024:VEM}.  In contrast to that application\hyp{}focused study, the purpose of the present paper is
to develop SBC itself as a general higher\hyp{}dimensional integration method.  Our contributions are as follows:
\begin{itemize}
  \item a scaled boundary cubature formula in $\Re^d$ for compact regions bounded by oriented hypersurface patches;
  \item an explicit three\hyp{}dimensional specialization for regions bounded by parametric surface patches, including
        tensor\hyp{}product and triangular patch parameterizations;
  \item recursive constructions for polytopes and for patch parameter domains that are not already canonical integration
        domains;
  \item a simplified affine\hyp{}polyhedron formula based on nested scaled boundary maps, yielding tetrahedral sector
        integrals in tensor\hyp{}product coordinates;
  \item transformations for point singularities in arbitrary dimension, including the cancellation provided by the
        radial Jacobian factor and generalized scaled boundary or Gauss--Jacobi variants; and
  \item a transverse SBC\hyp{}like transformation for affine singular sets, with straight-line examples in three
  dimensions.
\end{itemize}

The remainder of this paper is organized as follows.  In~\sref{sec:sbc}, we present the higher\hyp{}dimensional scaled
boundary parametrization, its cubature rule, and specializations to polytopes and three\hyp{}dimensional curved regions.
In~\sref{sec:singular}, we develop transformations for weakly singular integrands with point and line singularities.
Numerical studies in~\sref{sec:results} examine polynomial exactness on affine polyhedra and a four-dimensional affine
polytope, convergence for smooth nonpolynomial functions, curved surface geometries, and singular integration.
Concluding remarks are given in~\sref{sec:conclusion}.

\section{Higher dimensional scaled boundary cubature}\label{sec:sbc}

\subsection{General scaled boundary parametrization in $\Re^d$}
\label{sec:sbc-general}

Let $\mathcal{R} \subset \Re^d$ be a compact region whose boundary is represented by oriented hypersurface patches
$\mathcal{S}_i$, $i = 1, \dotsc, m$.  Each patch is given by a sufficiently smooth parametrization $\vm{s}_i : D_i
\rightarrow \mathcal{S}_i$, where $D_i \subset \Re^{d-1}$.  The patch orientations are taken to be consistent with the
outward orientation of $\partial \mathcal{R}$; in particular, the determinant appearing below is a signed quantity.  We
assume that each parameter domain $D_i$ is either a canonical integration domain, such as a hypercube or simplex, or has
a boundary representation that permits the scaled boundary construction to be applied recursively.  The patch
parametrizations and the maps introduced below need not be globally injective, provided the oriented image counts the
region with the correct multiplicity.

For a scaling center $\vx_0 \in \Re^d$, define the scaled boundary (SB) map:
\begin{equation}\label{eq:sbc-general-map}
  \vm{\varphi}_i(\rho,\vm{t})
    = \vx_0 + \rho\bigl(\vm{s}_i(\vm{t})-\vx_0\bigr),
  \qquad 0 \leq \rho \leq 1,\quad \vm{t} \in D_i .
\end{equation}
Apart from the translation by $\vx_0$, this is the scaled boundary parametrization introduced by
Song~\cite{Song:1997:TSB}.  The coordinate $\rho$ linearly scales each boundary point to the scaling center.  If
$D_i=[0,1]^{d-1}$, then \eqref{eq:sbc-general-map} maps the unit hypercube to a curved pyramid with apex $\vx_0$ and
base $\mathcal{S}_i$; for more general $D_i$, it maps $[0,1]\times D_i$ to the corresponding swept sector.  We denote
the union of these swept sectors by $\mathcal{R}^+$.  If $\mathcal{R}$ is star-convex with respect to $\vx_0$, then
$\mathcal{R}^+=\mathcal{R}$ up to shared sector boundaries.  Otherwise $\mathcal{R}^+$ may strictly contain
$\mathcal{R}$, and the signed contributions in \eqref{eq:sbc-general-integral} provide the cancellation needed to
recover the oriented integral over $\mathcal{R}$.

\begin{figure}[t]
  \centering
  \begin{tikzpicture}[scale=2.25, line cap=round, line join=round, >=Latex]
    \begin{scope}[xshift=-2.55cm]
      \fill[pal29] (0,0) rectangle (1.8,1.2);
      \draw[pal58!40] (0.6,0) -- (0.6,1.2);
      \draw[pal58!40] (1.2,0) -- (1.2,1.2);
      \draw[pal58!40] (0,0.4) -- (1.8,0.4);
      \draw[pal58!40] (0,0.8) -- (1.8,0.8);
      \draw[pal58, thick] (0,0) rectangle (1.8,1.2);
      \draw[pal58, ->] (0,-0.18) -- (1.8,-0.18) node[midway, below] {$t_1$};
      \draw[pal58, ->] (-0.18,0) -- (-0.18,1.2) node[midway, left] {$t_{d-1}$};
      \node at (0.9,0.6) {$D_i$};
    \end{scope}
    \draw[->, thick] (-0.55,0.6) -- (0.55,0.6) node[midway, above] {$\vm{s}_i$};
    \begin{scope}[xshift=0.95cm]
      \coordinate (x0) at (0,0);
      \coordinate (a) at (1.18,1.58);
      \coordinate (b) at (2.55,1.32);
      \coordinate (c) at (2.78,0.55);
      \coordinate (d) at (1.38,0.24);
      \coordinate (spt) at (2.14,0.88);
      \fill[pal29, opacity=0.55] (x0) -- (a)
        to[out=8,in=172] (b)
        to[out=-68,in=72] (c)
        to[out=190,in=-8] (d) -- cycle;
      \fill[pal58!20, opacity=0.72] (a)
        to[out=8,in=172] (b)
        to[out=-68,in=72] (c)
        to[out=190,in=-8] (d)
        to[out=108,in=-112] (a);
      \draw[pal58, thick] (a)
        to[out=8,in=172] (b)
        to[out=-68,in=72] (c)
        to[out=190,in=-8] (d)
        to[out=108,in=-112] (a);
      \draw[pal58!45] ($(a)!0.33!(d)$) .. controls (1.72,1.0) and (2.15,1.08) .. ($(b)!0.33!(c)$);
      \draw[pal58!45] ($(a)!0.66!(d)$) .. controls (1.78,0.62) and (2.18,0.72) .. ($(b)!0.66!(c)$);
      \draw[pal58!45] ($(a)!0.33!(b)$) .. controls (1.75,1.15) and (1.68,0.72) .. ($(d)!0.33!(c)$);
      \draw[pal58!45] ($(a)!0.66!(b)$) .. controls (2.25,1.08) and (2.18,0.65) .. ($(d)!0.66!(c)$);
      \draw[pal58, thick] (x0) -- (a);
      \draw[pal58, thick] (x0) -- (d);
      \foreach \p in {a,b,c,d} {
        \draw[pal58!70] (x0) -- (\p);
      }
      \draw[pal58!85, dashed, thick] (x0) -- (spt);
      \fill (x0) circle (1.5pt) node[left] {$\vx_0$};
      \fill ($(x0)!0.7!(spt)$) circle (1.3pt) node[right, yshift=-2pt] {$\vm{\varphi}_i(\rho,\vm{t})$};
      \node[above] at (1.85,1.05) {$\mathcal{S}_i$};
      \node at (0.93,0.75) {$\mathcal{R}^+_i$};
      \draw[->, pal58!95, thick] ($(x0)!0.12!(spt)$) -- ($(x0)!0.48!(spt)$) node[midway, above, yshift=-2pt] {$\rho$};
    \end{scope}
  \end{tikzpicture}
  \caption{Scaled boundary map from a patch parameter domain $D_i$ to a swept sector $\mathcal{R}^+_i$.  The grid on
  $D_i$ maps to corresponding parameter lines on the boundary patch $\mathcal{S}_i$.  The dashed line shows the radial
  segment obtained by fixing $\vm{t}$ and varying $\rho\in[0,1]$, and the marked point is $\vm{\varphi}_i(\rho,\vm{t})$
  for one intermediate value of $\rho$.}
  \label{fig:sbc-swept-sector}
\end{figure}

The Jacobian of \eqref{eq:sbc-general-map} is
\begin{equation}\label{eq:sbc-general-jacobian}
  J_i(\rho,\vm{t})
  = \det \nabla\vm{\varphi}_i(\rho,\vm{t})
  = \rho^{d-1}
    \det\Bigl[
      \vm{s}_i(\vm{t})-\vx_0 \quad
      \frac{\partial\vm{s}_i}{\partial t_1}(\vm{t}) \quad \cdots \quad
      \frac{\partial\vm{s}_i}{\partial t_{d-1}}(\vm{t})
    \Bigr].
\end{equation}
The factor $\rho^{d-1}$ is the key dimensional scaling: the $d-1$ directions tangent to the boundary patch are scaled by
$\rho$, whereas the direction associated with changing $\rho$, from $\vx_0$ to $\vm{s}_i(\vm{t})$, is not.

\begin{proposition}
Let the oriented patches $\mathcal{S}_i$ represent $\partial\mathcal{R}$, and let $f$ be smooth on an open set
containing $\mathcal{R}^+$.  Then
\begin{equation}\label{eq:sbc-general-integral}
  \int_{\mathcal{R}} f(\vx)\,d\vx
  =
  \sum_{i=1}^{m}\int_{0}^{1}\int_{D_i}
    f\bigl(\vm{\varphi}_i(\rho,\vm{t})\bigr)
    J_i(\rho,\vm{t})\,d\vm{t}\,d\rho .
\end{equation}
\end{proposition}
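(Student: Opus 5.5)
The plan is to reduce the volume integral to a boundary integral with the divergence theorem, and then recognize that boundary integral as the right-hand side of \eqref{eq:sbc-general-integral} written in the patch coordinates. Since $f$ is smooth on an open set containing $\mathcal{R}^+$, it is smooth along every radial segment from $\vx_0$ to a boundary point. We therefore introduce the vector field
\[
  \vm{F}(\vx) = (\vx-\vx_0)\int_0^1 \rho^{d-1} f\bigl(\vx_0+\rho(\vx-\vx_0)\bigr)\,d\rho .
\]
This field is defined and smooth on a neighborhood of $\mathcal{R}$; the star-shaped swept set $\mathcal{R}^+$ is what guarantees the radial segments stay inside the domain of $f$.

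The first step is the identity $\nabla\cdot\vm{F} = f$. Write $\vm{y}=\vx-\vx_0$ and $g(\rho)=\rho^{d-1}f(\vx_0+\rho\vm{y})$. Differentiating under the integral gives
\[
  \nabla\cdot\vm{F} = \int_0^1 \bigl(d\,\rho^{d-1}f + \rho^{d}\,\vm{y}\cdot\nabla f\bigr)\,d\rho .
\]
The integrand equals $\frac{d}{d\rho}\bigl(\rho^{d}f(\vx_0+\rho\vm{y})\bigr)$, so the integral evaluates to $f(\vx)$. The divergence theorem on $\mathcal{R}$ then yields $\int_{\mathcal{R}} f\,d\vx=\int_{\partial\mathcal{R}}\vm{F}\cdot\vm{n}\,dS$.

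The second step expresses the flux through the oriented patches. On $\mathcal{S}_i$ the oriented area element is $\vm{n}\,dS = \vm{N}_i(\vm{t})\,d\vm{t}$, where $\vm{N}_i$ is the generalized cross product of $\partial\vm{s}_i/\partial t_1,\dots,\partial\vm{s}_i/\partial t_{d-1}$. By the cofactor expansion of a determinant along its first column, $\vm{v}\cdot\vm{N}_i=\det[\vm{v}\ \ \partial_{t_1}\vm{s}_i\ \cdots\ \partial_{t_{d-1}}\vm{s}_i]$ for any vector $\vm{v}$. Substituting $\vm{F}(\vm{s}_i(\vm{t}))$, pulling the $\rho$-integral outside, and comparing with \eqref{eq:sbc-general-jacobian} turns each patch contribution into exactly the $i$-th term of \eqref{eq:sbc-general-integral}. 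Summing over $i$ completes the argument.

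The main obstacle is justifying the divergence theorem and the patch decomposition under the stated generality: non-injective parametrizations, multiplicities, and nonconvex $\mathcal{R}$ where $\mathcal{R}^+\supsetneq\mathcal{R}$. I would handle this by taking the hypothesis that the oriented patches represent $\partial\mathcal{R}$ to mean that the oriented boundary chain $\sum_i \vm{s}_i$ equals $\partial\mathcal{R}$ as an integral current. Equivalently, for every smooth $(d-1)$-form $\omega$, $\sum_i\int_{D_i}\vm{s}_i^*\omega=\int_{\partial\mathcal{R}}\omega$. Applying this to $\omega=\iota_{\vm{F}}\,d\vx$ and using Stokes's theorem makes multiplicities and overlaps cancel automatically. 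A pleasant feature of this route is that it never needs the sector decomposition to tile $\mathcal{R}$, so star-convexity is never used.

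As a cross-check, I would verify that the direct change of variables through $\vm{\varphi}_i$ gives the same result. That computation produces the degree (winding-number) weighting of the swept sectors, whose net value is $1$ on $\mathcal{R}$ and $0$ outside, matching the cancellation remark preceding the statement.
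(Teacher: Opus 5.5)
Your proposal is correct and follows the paper's proof step for step: the same radial field $\vm{F}$, the same identification of its divergence as $\int_0^1 \frac{d}{d\tau}[\tau^d f(\vx_0+\tau\vm{y})]\,d\tau = f(\vx)$, the divergence theorem, and the determinant formula for $\vm{a}\cdot\vm{N}_i$. Your current-theoretic reading of ``the patches represent $\partial\mathcal{R}$'' and your remark that $\mathcal{R}^+$ contains the radial segments from $\vx_0$, so that $\vm{F}$ is smooth near $\mathcal{R}$, make explicit what the paper leaves implicit.
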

\begin{proof}
For $\vm{y}=\vx-\vx_0$, define
\[
  \vm{F}(\vx)=
  \vm{y}\int_0^1 f(\vx_0+\tau\vm{y})\tau^{d-1}\,d\tau .
\]
Using the product and chain rules,
\[
  \nabla\cdot\vm{F}(\vx)
  =
  \int_0^1
  \left[
    d\tau^{d-1}f(\vx_0+\tau\vm{y})
    +
    \tau^d\nabla f(\vx_0+\tau\vm{y})\cdot\vm{y}
  \right]d\tau
  =
  \int_0^1\frac{d}{d\tau}
  \left[\tau^d f(\vx_0+\tau\vm{y})\right]\,d\tau
  = f(\vx).
\]
The divergence theorem gives
\[
  \int_{\mathcal{R}} f(\vx)\,d\vx
  =
  \sum_{i=1}^m\int_{D_i}\vm{F}(\vm{s}_i(\vm{t}))\cdot\vm{N}_i(\vm{t})\,d\vm{t},
\]
where $\vm{N}_i\,d\vm{t}$ is the oriented normal measure induced by $\vm{s}_i$.  For any $\vm{a}\in\Re^d$,
\[
  \vm{a}\cdot\vm{N}_i(\vm{t})
  =
  \det\Bigl[
    \vm{a} \quad
    \frac{\partial\vm{s}_i}{\partial t_1}(\vm{t}) \quad \cdots \quad
    \frac{\partial\vm{s}_i}{\partial t_{d-1}}(\vm{t})
  \Bigr].
\]
Taking $\vm{a}=\vm{s}_i(\vm{t})-\vx_0$, substituting the definition of $\vm{F}$, and identifying
$\vx_0+\tau(\vm{s}_i(\vm{t})-\vx_0)=\vm{\varphi}_i(\tau,\vm{t})$ leads to \eqref{eq:sbc-general-integral}, with the
dummy variable $\tau$ renamed as $\rho$.
\end{proof}

When $\mathcal{R}$ is star-convex with respect to an interior $\vx_0$, the swept sectors form a partition of
$\mathcal{R}$ up to shared boundaries, and the signed Jacobians have one sign when the patch orientations are
consistent.  This is the setting in which positive cubature weights are expected.  For nonconvex regions, or for scaling
centers outside the region, $\mathcal{R}^+$ can extend outside $\mathcal{R}$; the formula remains an oriented identity
and the resulting cubature weights may be signed.  In that case, $f$ must be evaluable on the swept region and the
integral is obtained by cancellation.  The map is not one-to-one at $\rho=0$, since every patch parameter maps to
$\vx_0$, but this set has zero measure.  Thus the degeneracy at the scaling center is not a practical obstruction for
cubature, although it can matter in boundary-value discretizations based directly on the same parametrization
\cite{Arioli:2019:SBP}.

\begin{figure}[t]
  \centering
  \begin{tikzpicture}[scale=1.38, line cap=round, line join=round, >=Latex]
    \begin{scope}[xshift=-2.75cm]
      \coordinate (x0) at (0,0);
      \coordinate (p1) at (0.0,1.35);
      \coordinate (p2) at (1.25,0.75);
      \coordinate (p3) at (1.05,-0.8);
      \coordinate (p4) at (-0.55,-1.25);
      \coordinate (p5) at (-1.35,-0.15);
      \fill[pal29] (p1) -- (p2) -- (p3) -- (p4) -- (p5) -- cycle;
      \draw[pal58, thick] (p1) -- (p2) -- (p3) -- (p4) -- (p5) -- cycle;
      \foreach \p in {p1,p2,p3,p4,p5} {
        \draw[pal58!65] (x0) -- (\p);
      }
      \fill (x0) circle (1.5pt) node[below left] {$\vx_0$};
      \node at (0,-1.75) {(a) $\mathcal{R}^+=\mathcal{R}$};
    \end{scope}
    \begin{scope}[xshift=2.15cm]
      \coordinate (x0) at (-2.75,0.05);
      \coordinate (q1) at (-0.85,1.35);
      \coordinate (q2) at (1.35,1.15);
      \coordinate (q3) at (0.25,0.2);
      \coordinate (q4) at (1.45,-1.12);
      \coordinate (q5) at (-0.8,-1.22);
      \fill[pal29!55, opacity=0.65] (x0) -- (q1) -- (q2) -- cycle;
      \fill[pal29!55, opacity=0.65] (x0) -- (q2) -- (q3) -- cycle;
      \fill[pal29!55, opacity=0.65] (x0) -- (q3) -- (q4) -- cycle;
      \fill[pal29!55, opacity=0.65] (x0) -- (q4) -- (q5) -- cycle;
      \fill[pal29!55, opacity=0.65] (x0) -- (q5) -- (q1) -- cycle;
      \fill[pal29] (q1) -- (q2) -- (q3) -- (q4) -- (q5) -- cycle;
      \draw[pal58, thick] (q1) -- (q2) -- (q3) -- (q4) -- (q5) -- cycle;
      \draw[pal58!65] (x0) -- (q1);
      \draw[pal58!65] (x0) -- (q2);
      \draw[pal58!65] (x0) -- (q3);
      \draw[pal58!65] (x0) -- (q4);
      \draw[pal58!65] (x0) -- (q5);
      \fill (x0) circle (1.5pt) node[left] {$\vx_0$};
      \node[pal58, font=\small] at (-1.5,-0.125) {$\mathcal{R}^+\setminus\mathcal{R}$};
      \node at (0,-1.75) {(b) signed cancellation};
    \end{scope}
  \end{tikzpicture}
  \caption{In the star-convex case, the swept sectors partition $\mathcal{R}$.  For a nonconvex region or exterior
  scaling center, $\mathcal{R}^+$ can include points outside $\mathcal{R}$, and the oriented sum cancels the extra
  contributions.}
  \label{fig:sbc-star-nonconvex}
\end{figure}

\subsection{Cubature rules and polynomial exactness}
\label{sec:sbc-cubature}

Equation \eqref{eq:sbc-general-integral} reduces the volume integral to a sum of integrals over simple product domains
whenever the patch domains $D_i$ are simple.  If $D_i=[0,1]^{d-1}$, a tensor-product Gauss rule in $\rho$ and the $d-1$
patch parameters gives
\begin{equation}\label{eq:sbc-product-rule}
  \int_{\mathcal{R}} f(\vx)\,d\vx
  \approx
  \sum_{i=1}^{m}\sum_{q_\rho}\sum_{\vm{q}}
  w_{q_\rho}w_{\vm{q}}\,
  f\bigl(\vm{\varphi}_i(\rho_{q_\rho},\vm{t}_{\vm{q}})\bigr)
  J_i(\rho_{q_\rho},\vm{t}_{\vm{q}}),
\end{equation}
where $w_{\vm{q}}$ denotes the product of the one-dimensional weights in the patch coordinates.  The point count scales
as $O(mp^d)$ for $m$ patches and a $p$-point rule in each coordinate.  If $D_i$ is a simplex, one may use a standard
simplex rule on $D_i$ or apply SBC recursively to the parameter domain.  The recursive option is useful when $D_i$ is a
polygonal, polyhedral, or trimmed domain whose boundary is already available in parametric form.  In the affine
polyhedron specialization below, a tensor-product rule with orders $p_\rho$, $p_t$, and $p_u$ uses $p_\rho p_t p_u\sum_i
n_i$ points, where $n_i$ is the number of edges of face $\mathcal{S}_i$.

The exactness of \eqref{eq:sbc-product-rule} follows from the mapped integrand.  If $f$ is polynomial and each
$\vm{s}_i$ is polynomial, then $f(\vm{\varphi}_i(\rho,\vm{t}))J_i(\rho,\vm{t})$ is a polynomial in the integration
parameters, and a sufficiently high-order rule integrates it exactly.  This includes affine polytopes and polynomial
surface patches.  For rational patches, including NURBS, the mapped integrand is generally rational even when $f$ is
polynomial, so polynomial exactness is not expected; for smooth rational geometry and smooth $f$, the tensor-product
rules still give rapid convergence as shown in \sref{sec:ex-curved-solids}.

\subsection{Three-dimensional regions bounded by parametric surfaces}
\label{sec:sbc-3d-surfaces}

For $\mathcal{R}\subset\Re^3$, the boundary patches are surfaces
$\vm{s}_i(t,u):D_i\rightarrow\mathcal{S}_i$.  The SB map becomes
\begin{equation}\label{eq:sbc-surface-map}
  \vm{\varphi}_i(\rho,t,u)
  = \vx_0+\rho\bigl(\vm{s}_i(t,u)-\vx_0\bigr),
  \qquad 0\leq \rho\leq 1,\quad (t,u)\in D_i ,
\end{equation}
and its Jacobian is
\begin{equation}\label{eq:sbc-surface-jacobian}
  J_i(\rho,t,u)
  =
  \rho^2
  \bigl(\vm{s}_i(t,u)-\vx_0\bigr)\cdot
  \left(
    \frac{\partial\vm{s}_i}{\partial t}(t,u)
    \times
    \frac{\partial\vm{s}_i}{\partial u}(t,u)
  \right).
\end{equation}
The order of the cross product is part of the surface orientation.  Reversing the patch orientation reverses the sign of
the contribution.  For a star-convex solid and an interior scaling center, outward-oriented surface patches give
positive signed volumes.

Tensor-product Bezier, B-spline, and NURBS patches have $D_i=[0,1]^2$ and fit directly into \eqref{eq:sbc-product-rule}.
Triangular Bezier patches and other simplex-parametrized surfaces can be integrated with a triangular rule on $D_i$, or
with a secondary SB construction on the triangular parameter domain.  Trimmed patches can also be used, provided that
the trimmed parameter region is integrated by a compatible rule.  For polynomial surface patches and polynomial $f$,
\eqref{eq:sbc-surface-jacobian} again yields a polynomial mapped integrand.  For NURBS and other rational patches, the
mapped integrand is rational, so the numerical studies below assess convergence rather than polynomial exactness.

\subsection{Recursive integration over polytopes}
\label{sec:sbc-recursive-polytopes}

Let $\mathcal{P}\subset\Re^d$ be a polytope whose facets $\mathcal{P}_i$ are $(d-1)$-dimensional polytopes lying in
affine hyperplanes $\mathcal{H}_i$.  Applying \eqref{eq:sbc-general-map} to each facet reduces the integral over
$\mathcal{P}$ to integrals over the facets.  If a facet is not already a canonical integration domain, a second SB map
is applied within $\mathcal{H}_i$ from a facet scaling center $\vx_i\in\mathcal{H}_i$ to the boundary of
$\mathcal{P}_i$.  The process continues until the remaining domains are edges, simplices, hypercubes, or other domains
with available rules.

The nested construction is most simply viewed as a sequence of affine or curved radial maps.  Each level contributes a
radial coordinate and the corresponding radial power: the first level in $\Re^d$ contributes $\rho^{d-1}$, the next
level on a facet contributes a factor of degree $d-2$, and so on.  The algorithm is direct: integrate over the oriented
boundary of the current polytope in its own affine hull, using a scaling center in that affine hull, and multiply the
Jacobians from the nested maps.  The same oriented interpretation applies at every level.  For star-convex choices of
scaling centers the sectors partition the polytope; otherwise, signed sectors represent the integral by cancellation.
Algorithm~\ref{alg:recursive-sbc-polytope} summarizes this recursive construction.

\begin{algorithm}[t]
\caption{Recursive scaled boundary cubature over a polytope}
\label{alg:recursive-sbc-polytope}
\begin{algorithmic}[1]
\Require An oriented $k$\hyp{}dimensional polytope $\mathcal{P}^{(k)}$, scaling center
  $\vx_c^{(k)}$ in its affine hull, and an accumulated map $\vm{\Phi}$ and Jacobian factor $J$.
\Ensure Physical cubature points and signed weights for the branch rooted at $\mathcal{P}^{(k)}$.
\If{$\mathcal{P}^{(k)}$ is a canonical integration domain}
  \State Apply the corresponding rule on $\mathcal{P}^{(k)}$.
  \State Compose its points with $\vm{\Phi}$ and multiply its weights by $J$.
  \State \Return the resulting physical points and signed weights.
\EndIf
\ForAll{oriented facets $\mathcal{P}^{(k-1)}_\ell$ of $\mathcal{P}^{(k)}$}
  \State Choose a facet scaling center $\vx_{c,\ell}^{(k-1)}$ in the affine hull of
    $\mathcal{P}^{(k-1)}_\ell$.
  \State Define the SB map from $\vx_c^{(k)}$ to $\mathcal{P}^{(k-1)}_\ell$ and its signed Jacobian factor.
  \State Compose this map and Jacobian factor with $\vm{\Phi}$ and $J$.
  \State Call Algorithm~\ref{alg:recursive-sbc-polytope} with arguments
    $\mathcal{P}^{(k-1)}_\ell$, $\vx_{c,\ell}^{(k-1)}$, the composed map, and the composed Jacobian factor.
\EndFor
\State \Return the union of the points and signed weights generated by all recursive branches.
\end{algorithmic}
\end{algorithm}

\subsection{Simplification for affine polyhedra}
\label{sec:sbc-affine-polyhedra}

For an affine polyhedron $\mathcal{P}\subset\Re^3$, the recursive construction reduces to tensor-product integrals over
tetrahedral sectors.  In this specialization, $\rho$ is the volume radial coordinate, $\eta$ is the face-radial
coordinate, and $t$ is the edge coordinate.  Let $\mathcal{S}_i$, $i=1,\dotsc,m$, be the oriented planar faces of
$\mathcal{P}$, and choose a face scaling point $\vx_i$ in the plane of $\mathcal{S}_i$.  Let the oriented boundary of
face $\mathcal{S}_i$ consist of edges with endpoint pairs $\vm{v}_{ij1}$ and $\vm{v}_{ij2}$, $j=1,\dotsc,n_i$, ordered
consistently with the face orientation.  The face sector associated with edge $j$ is parametrized by

\begin{figure}[t]
  \centering
  \begin{tikzpicture}[scale=1.575, line cap=round, line join=round, >=Latex]
    \coordinate (x0) at (-0.55,-1.25);
    \coordinate (v1) at (-0.85,0.70);
    \coordinate (v2) at (1.45,0.92);
    \coordinate (v3) at (1.95,2.05);
    \coordinate (v4) at (-1.15,1.95);
    \coordinate (xi) at (0.05,1.35);
    \coordinate (edgept) at ($(v1)!0.58!(v2)$);
    \fill[pal29] (v1) -- (v2) -- (v3) -- (v4) -- cycle;
    \draw[pal58, thick] (v1) -- (v2) -- (v3) -- (v4) -- cycle;
    \draw[pal58!60] (xi) -- (v1);
    \draw[pal58!60] (xi) -- (v2);
    \fill[pal58!18] (x0) -- (xi) -- (v1) -- cycle;
    \fill[pal58!26] (x0) -- (xi) -- (v2) -- cycle;
    \fill[pal58!35] (x0) -- (v1) -- (v2) -- cycle;
    \draw[pal58, thick] (x0) -- (xi) -- (v1) -- cycle;
    \draw[pal58, thick] (x0) -- (v2) -- (xi);
    \draw[pal58, thick] (v1) -- (v2);
    \draw[pal58, dashed] (x0) -- (v3);
    \draw[pal58, dashed] (x0) -- (v4);
    \fill (x0) circle (1.5pt) node[below] {$\vx_0$};
    \fill (xi) circle (1.5pt) node[left, above] {$\vx_i$};
    \fill (v1) circle (1.5pt) node[left] {$\vm{v}_{ij1}$};
    \fill (v2) circle (1.5pt) node[right] {$\vm{v}_{ij2}$};
    \node[above] at (0.25,2.08) {$\mathcal{S}_i$};
    \node at (0.15,0.05) {$6V_{ij}$};
    \draw[->, pal58!95, thick] ($(xi)!0.18!(edgept)$) -- ($(xi)!0.62!(edgept)$) node[midway, right] {$\eta$};
    \draw[->, pal58!95, thick] ($(v1)!0.22!(v2)$) -- ($(v1)!0.58!(v2)$) node[midway, below] {$t$};
    \draw[->, pal58!95, thick] ($(x0)!0.14!(xi)$) -- ($(x0)!0.48!(xi)$) node[midway, left] {$\rho$};
  \end{tikzpicture}
  \caption{One tetrahedral sector in the affine-polyhedron construction.  The face map sweeps from the face point
  $\vx_i$ to the edge $(\vm{v}_{ij1},\vm{v}_{ij2})$, and the volume map sweeps that face sector from $\vx_0$.  The
  arrows indicate representative coordinate directions: $t$ moves along the edge, $\eta$ sweeps the face sector from
  $\vx_i$ toward points on the edge, and $\rho$ sweeps the resulting face sector through the tetrahedral volume.}
  \label{fig:sbc-polyhedron-sector}
\end{figure}

\begin{equation}\label{eq:sbc-face-sector-map}
  \vm{s}_{ij}(\eta,t)
  =
  \vx_i
  +
  \eta\left[
    (1-t)\vm{v}_{ij1}+t\vm{v}_{ij2}-\vx_i
  \right],
  \qquad 0\leq \eta,t\leq 1 .
\end{equation}
Nesting this map inside the volume SB map produces
\begin{equation}\label{eq:sbc-polyhedron-sector-map}
  \vm{\varphi}_{ij}(\rho,\eta,t)
  =
  \vx_0+\rho\bigl(\vm{s}_{ij}(\eta,t)-\vx_0\bigr),
  \qquad 0\leq \rho,\eta,t\leq 1 .
\end{equation}
Its Jacobian is
\begin{equation}\label{eq:sbc-polyhedron-sector-jacobian}
  J_{ij}(\rho,\eta,t)
  =
  \rho^2 \eta\,
  (\vx_i-\vx_0)\cdot
  \left[
    (\vm{v}_{ij1}-\vx_i)\times(\vm{v}_{ij2}-\vm{v}_{ij1})
  \right]
  =
  6 V_{ij}\,\rho^2 \eta ,
\end{equation}
where $V_{ij}$ is the signed volume of the tetrahedron with vertices
$\vx_0$, $\vx_i$, $\vm{v}_{ij1}$, and $\vm{v}_{ij2}$.  Therefore,
\begin{equation}\label{eq:sbc-polyhedron-integral}
  \int_{\mathcal{P}} f(\vx)\,d\vx
  =
  \sum_{i=1}^{m}\sum_{j=1}^{n_i}
  6V_{ij}
  \int_0^1\int_0^1\int_0^1
    f\bigl(\vm{\varphi}_{ij}(\rho,\eta,t)\bigr)\rho^2\eta\,dt\,d\eta\,d\rho .
\end{equation}
Equation \eqref{eq:sbc-polyhedron-integral} is a tensor-product cubature formula over each sector.  Choosing $\vx_0$ at
a vertex of the polyhedron or $\vx_i$ at a vertex of a face can eliminate some zero-volume or redundant sectors, but the
best choice depends on the face and edge counts and on the desired weight distribution.  For convex and star-convex
polyhedra, interior scaling centers lead to positive sector volumes when the boundary orientation is consistent.  For
nonconvex polyhedra or exterior choices of $\vx_0$, the signed volumes in \eqref{eq:sbc-polyhedron-integral} handle the
required cancellation, provided the integrand can be evaluated on the swept tetrahedral sectors.

\subsection{Relationship to HNI and previous SBC work}
\label{sec:sbc-hni}

The SB formula also clarifies the relationship with homogeneous numerical integration.  Suppose, for simplicity, that
$\vx_0=\vm{0}$ and that $f$ is homogeneous of degree $q$, so $f(\rho\vm{s})=\rho^q f(\vm{s})$.  The radial part of
\eqref{eq:sbc-general-integral} then contains $\rho^{q+d-1}$ and can be integrated analytically:
\[
  \int_0^1 \rho^{q+d-1}\,d\rho = \frac{1}{q+d}.
\]
This recovers the dimension-reduction structure used in HNI for homogeneous functions over polytopes
\cite{Lasserre:1998:ICP,Lasserre:1999:IHF,Chin:2015:NIH}.  In particular, the recursive face-to-edge construction in
Algorithm~\ref{alg:recursive-sbc-polytope} parallels the recursive polyhedron integration procedure in Section~3.1 of
Chin et al.~\cite{Chin:2015:NIH}: each step replaces an integral over a polytope by oriented integrals over its facets.
The distinction is that HNI evaluates the radial contribution analytically by exploiting homogeneity, whereas SBC
retains the radial variables and applies quadrature to them.  This removes the restriction to homogeneous integrands and
permits black-box smooth functions, nonpolynomial functions, and parametric curved geometries.  The two-dimensional
version of this connection was developed in detail in~\cite{Chin:2021:SBC}; the present construction is its
higher-dimensional extension.

\section{Integration of weakly singular functions}\label{sec:singular}

The scaled boundary map is particularly useful for weakly singular integrals when the singular set is compatible with
the scaling direction.  The Jacobian factor generated by the map can cancel part, or all, of the singular radial power,
thereby reducing the burden on the one-dimensional quadrature rule in the scaling coordinate.  This section summarizes
that mechanism for point singularities in arbitrary dimension and then introduces transverse scaling for affine singular
sets, with an explicit three-dimensional straight-line specialization.

\subsection{\texorpdfstring{Point singularities in $\Re^d$}{Point singularities in R-d}}\label{sec:point-rd}

Let $\mathcal{R}\subset\Re^d$ be represented by the oriented boundary patches used in \sref{sec:sbc-general}, and
consider
\begin{equation}\label{eq:singular-point-integral}
  I = \int_{\mathcal{R}} \frac{g(\vx)}{\|\vx-\vx_c\|^\beta}\,d\vx,
\end{equation}
where $g$ is smooth on an open set containing the swept region $\mathcal{R}^+$, as in the nonsingular case; the only
singular factor is the radial distance term.  A point singularity of this form is locally integrable if
\begin{equation}\label{eq:point-integrability}
  0 < \beta < d .
\end{equation}
We first assume that $\vx_c$ is an admissible scaling center and set $\vx_0=\vx_c$ in
\eqref{eq:sbc-general-map}.  Then
\begin{equation}\label{eq:point-distance-standard}
  \|\vm{\varphi}_i(\rho,\vm{t})-\vx_c\| =
  \rho\,\|\vm{s}_i(\vm{t})-\vx_c\|,
\end{equation}
and the Jacobian in \eqref{eq:sbc-general-jacobian} contributes $\rho^{d-1}$.  Hence, \eqref{eq:singular-point-integral}
becomes
\begin{equation}\label{eq:point-standard-sbc}
  I = \sum_i \int_0^1 \int_{D_i}
  g\bigl(\vm{\varphi}_i(\rho,\vm{t})\bigr)
  \frac{\rho^{d-1-\beta}\,
  \det\left[\vm{s}_i(\vm{t})-\vx_c,\partial_1\vm{s}_i,\dotsc,\partial_{d-1}\vm{s}_i\right]}
  {\|\vm{s}_i(\vm{t})-\vx_c\|^\beta}
  \,d\vm{t}\,d\rho .
\end{equation}
The singular radial dependence has been reduced to the power $\rho^{d-1-\beta}$.  In three dimensions, the common
kernels $1/r$ and $1/r^2$ therefore leave the powers $\rho$ and $\rho^0$, respectively.  The latter case removes the
radial singularity exactly, while the former yields a smooth radial factor for smooth $g$ and boundary maps.

For fractional or stronger weak singularities, it can be advantageous to modify the radial scaling.  Define the
generalized scaled boundary map
\begin{equation}\label{eq:generalized-point-map}
  \vm{\varphi}_{i,\alpha}(\xi,\vm{t})
  = \vx_c + \xi^\alpha\bigl(\vm{s}_i(\vm{t})-\vx_c\bigr),
  \qquad 0\leq\xi\leq1,\quad \alpha>0 .
\end{equation}
Since $\rho=\xi^\alpha$, this map has Jacobian
\begin{equation}\label{eq:generalized-point-jacobian}
  J_{i,\alpha}(\xi,\vm{t}) =
  \alpha\,\xi^{\alpha d-1}
  \det\left[\vm{s}_i(\vm{t})-\vx_c,\partial_1\vm{s}_i,\dotsc,\partial_{d-1}\vm{s}_i\right] .
\end{equation}
Substitution into \eqref{eq:singular-point-integral} gives
\begin{equation}\label{eq:point-generalized-sbc}
  I = \sum_i \int_0^1 \int_{D_i}
  g\bigl(\vm{\varphi}_{i,\alpha}(\xi,\vm{t})\bigr)
  \frac{\alpha\,\xi^{\alpha(d-\beta)-1}\,
  \det\left[\vm{s}_i(\vm{t})-\vx_c,\partial_1\vm{s}_i,\dotsc,\partial_{d-1}\vm{s}_i\right]}
  {\|\vm{s}_i(\vm{t})-\vx_c\|^\beta}
  \,d\vm{t}\,d\xi .
\end{equation}
The exponent $\alpha(d-\beta)-1$ is nonnegative whenever $\alpha(d-\beta)\geq1$.  For polynomial integrands and affine
patches, choosing the smallest positive integer $\alpha$ such that $\alpha(d-\beta)$ is a positive integer removes
fractional radial powers while limiting the increase in polynomial degree caused by the composition
$g(\vm{\varphi}_{i,\alpha})$.  When this choice would make the mapped integrand unnecessarily high degree, an
alternative is to retain the standard map and use a Gauss--Jacobi rule in the radial coordinate with weight
$\rho^{d-1-\beta}$, or to use a Gauss--Jacobi rule in $\xi$ for the weight $\xi^{\alpha(d-\beta)-1}$; see, for example,
the convergence results for singular simplex integrals in \cite{Chernov:2012:ECG} and the two-dimensional SBC discussion
in \cite{Chin:2021:SBC}.

The formulas above remove only the radial singularity associated with the scaling center.  The remaining factor
$\|\vm{s}_i(\vm{t})-\vx_c\|^{-\beta}$ is a boundary-patch contribution.  If the singular point lies on, or very near,
one of the boundary patches, then this factor can be singular or nearly singular in the patch parameters.  In two
dimensions, this residual edge contribution can be treated by distance transformations for affine edges and related
near-singular integration techniques~\cite{Ma:2002:DTN,Lv:2019:ASD,Chin:2021:SBC}.  In higher dimensions, the same issue
becomes a lower-dimensional singular or nearly singular integral over the boundary-patch parameter domain.  Possible
remedies include patch subdivision, local patch-parameter transformations, or an alternative scaling center to isolate
the difficult feature.  The point-singularity studies in \sref{sec:ex-point-singularities} first isolate the radial part
of this behavior and then test one such local patch-parameter transformation: a face-level transfer for the
boundary-patch contribution that remains when the singular point approaches a face.

\subsection{Point singularities in three-dimensional sectors}\label{sec:point-3d}

For a three-dimensional region bounded by parametric surfaces, the preceding expression takes a compact form.  Let
$\vm{s}_i(t,u)$ be an oriented surface patch and set $\vx_0=\vx_c$.  With the generalized map
\begin{equation}\label{eq:point-3d-map}
  \vm{\varphi}_{i,\alpha}(\xi,t,u)
  = \vx_c + \xi^\alpha\bigl(\vm{s}_i(t,u)-\vx_c\bigr),
\end{equation}
we obtain
\begin{equation}\label{eq:point-3d-integral}
  \int_{\mathcal{R}} \frac{g(\vx)}{\|\vx-\vx_c\|^\beta}\,d\vx
  = \sum_i \int_0^1 \int_{D_i}
  g\bigl(\vm{\varphi}_{i,\alpha}(\xi,t,u)\bigr)
  \frac{\alpha\,\xi^{\alpha(3-\beta)-1}
  \bigl(\vm{s}_i(t,u)-\vx_c\bigr)\cdot
  \left(\partial_t\vm{s}_i\times\partial_u\vm{s}_i\right)}
  {\|\vm{s}_i(t,u)-\vx_c\|^\beta}
  \,dt\,du\,d\xi .
\end{equation}
The standard choice $\alpha=1$ leaves the radial powers $\xi^{2-\beta}$.  Thus the Newtonian kernel $\beta=1$ leaves a
factor $\xi$, and the stronger weak singularity $\beta=2$ leaves no radial power.  If $\beta$ is fractional, or if a
higher-order smoothness condition is desired in the radial coordinate, the generalized map can be chosen so that
$\alpha(3-\beta)-1$ is a convenient nonnegative integer.  Otherwise, a weighted radial rule is often preferable because
it avoids over-resolving the composed function $g(\vx_c+\xi^\alpha(\vm{s}_i-\vx_c))$.

For affine polyhedra, \eqref{eq:sbc-polyhedron-integral} can be modified in the same manner by choosing the volume
scaling center $\vx_0=\vx_c$ and replacing the outer scaling coordinate by $\xi^\alpha$.  For a face sector
$\vm{s}_{ij}(t,u)$, this gives
\begin{equation}\label{eq:point-polyhedron-sector}
  \int_{\mathcal{P}} \frac{g(\vx)}{\|\vx-\vx_c\|^\beta}\,d\vx
  = \sum_i\sum_j 6V_{ij}\int_0^1\int_0^1\int_0^1
  g\bigl(\vx_c+\xi^\alpha(\vm{s}_{ij}(t,u)-\vx_c)\bigr)
  \frac{\alpha\,\xi^{\alpha(3-\beta)-1}t}
  {\|\vm{s}_{ij}(t,u)-\vx_c\|^\beta}
  \,du\,dt\,d\xi .
\end{equation}
The factor $t$ is inherited from the recursive face decomposition and is independent of the point-singularity
cancellation.  The face scaling points $\vx_i$ still influence the behavior of the surface factor
$\|\vm{s}_{ij}(t,u)-\vx_c\|^{-\beta}$ and should be chosen, together with any face subdivision, so that near-singular
features are not spread over large parameter intervals.

If $\vx_c$ lies outside $\mathcal{R}$, the integrand is smooth on the region but may be nearly singular.  Such integrals
are known to converge slowly under standard polynomial-precision rules when the source approaches the integration
domain~\cite{Chin:2017:MCD,Ma:2002:DTN}.  In this setting, placing the scaling center at $\vx_c$ may require evaluating
$g$ on swept sectors outside the physical region and can introduce signed weights.  This is permissible in the oriented
formulation when the integrand is defined on the swept region, but it is not always the most stable choice. Local
subdivision around the closest boundary feature, a local patch-parameter transformation, or an adaptive rule may be more
efficient for nearly singular exterior sources.  The numerical examples in \sref{sec:ex-point-singularities} include
both separated point singularities and a near-boundary point-source test that applies such a face-level parameter
transfer to the residual boundary factor.

\subsection{Transverse scaling for affine singular sets}\label{sec:line}

We consider a straight line singularity in three dimensions.  Choose a point on the line as the origin and an
orthonormal frame in which $\vm{e}_3$ is tangent to the line and $\vm{e}_1,\vm{e}_2$ span the transverse plane.  In
these adapted coordinates, write $\vx=x_1\vm{e}_1+x_2\vm{e}_2+x_3\vm{e}_3$.  The singular line is
$\{\lambda\vm{e}_3:\lambda\in\Re\}$, and the transverse distance is $r_\perp(\vx)=(x_1^2+x_2^2)^{1/2}$.  Since the local
transverse measure is two-dimensional, the kernel $r_\perp(\vx)^{-\beta}$ is locally integrable when
\begin{equation}\label{eq:transverse-integrability}
  0 < \beta < 2 .
\end{equation}
We therefore consider integrals of the form
\begin{equation}\label{eq:line-singular-integral}
  I = \int_{\mathcal{R}} \frac{g(\vx)}{r_\perp(\vx)^\beta}\,d\vx .
\end{equation}
For each boundary patch, write

\begin{equation}\label{eq:line-surface-components}
  s_{i1}=\vm{s}_i\cdot\vm{e}_1,
  \qquad
  s_{i2}=\vm{s}_i\cdot\vm{e}_2,
  \qquad
  s_{i3}=\vm{s}_i\cdot\vm{e}_3 .
\end{equation}
The line-singularity map scales only the transverse coordinates,
\begin{equation}\label{eq:line-map}
  \widetilde{\vm{\varphi}}_i(\rho,t,u)
  = \rho s_{i1}(t,u)\vm{e}_1
  + \rho s_{i2}(t,u)\vm{e}_2
  + s_{i3}(t,u)\vm{e}_3,
  \qquad 0\leq\rho\leq1 .
\end{equation}
The Jacobian determinant of \eqref{eq:line-map} is
\begin{equation}\label{eq:line-map-jacobian}
  \widetilde{J}_i(\rho,t,u)
  = \rho\left(
    s_{i1}\left(\partial_t s_{i2}\partial_u s_{i3}-\partial_u s_{i2}\partial_t s_{i3}\right)
    -s_{i2}\left(\partial_t s_{i1}\partial_u s_{i3}-\partial_u s_{i1}\partial_t s_{i3}\right)
  \right) .
\end{equation}
Equivalently, if $\vm{s}_{i\perp}=(s_{i1},s_{i2})$ and
$r_{i\perp}=\|\vm{s}_{i\perp}\|$, then the distance from the mapped point to the singular line satisfies
\begin{equation}\label{eq:line-distance}
  \operatorname{dist}\left(\widetilde{\vm{\varphi}}_i(\rho,t,u),\{\lambda\vm{e}_3:\lambda\in\Re\}\right)
  =\rho r_{i\perp} .
\end{equation}
Consequently,
\begin{equation}\label{eq:line-transformed-integral}
  I = \sum_i \int_0^1\int_{D_i}
  g\bigl(\widetilde{\vm{\varphi}}_i(\rho,t,u)\bigr)
  \frac{\rho^{1-\beta}}{r_{i\perp}^{\beta}}
  \left(
    s_{i1}\left(\partial_t s_{i2}\partial_u s_{i3}-\partial_u s_{i2}\partial_t s_{i3}\right)
    -s_{i2}\left(\partial_t s_{i1}\partial_u s_{i3}-\partial_u s_{i1}\partial_t s_{i3}\right)
  \right)
  \,dt\,du\,d\rho .
\end{equation}
For the common kernel $r_\perp(\vx)^{-1}$, the factor $\rho$ in the Jacobian cancels the transverse radial singularity
exactly.  For this three-dimensional line case, the transformed radial power is $\rho^{1-\beta}$, reflecting the
two-dimensional transverse measure.  Fractional values of $\beta$ can be handled by Gauss--Jacobi quadrature in $\rho$
or by a generalized transverse scaling $\rho=\xi^\alpha$, which changes the radial power to $\xi^{\alpha(2-\beta)-1}$.
Note, the construction assumes a straight singular line, or a local approximation in which a straight tangent frame is
adequate.  Curved singular lines require local frames and additional geometric terms.

The denominator $r_{i\perp}^\beta$ in \eqref{eq:line-transformed-integral} is the analogue of the boundary-patch factor
in the point-singularity formula.  If the singular line intersects, or nearly intersects, a boundary patch, then
$r_{i\perp}$ can vanish or become small in the patch parameters and the transverse scaling alone is insufficient.  This
is the three-dimensional counterpart of the edge-direction near singularities treated by distance transformations in two
dimensions~\cite{Ma:2002:DTN,Lv:2019:ASD,Chin:2021:SBC}.  The patch should then be subdivided so the intersection is
isolated, or an additional parameter transformation should be applied on the affected patch.  The numerical study in
\sref{sec:ex-line-singularities} tests this straight-line transformation for a centered line and then moves the line
toward a boundary patch to show how a face-level parameter transformation removes the remaining near-singular patch
behavior.

This line-singularity map and the standard SB map can also be viewed as members of a broader family of
scaled-boundary-like transformations.  Such maps use boundary information to define canonical quadrature points by
preserving selected coordinates and scaling the remaining transverse coordinates toward a lower-dimensional reference
set.  For weakly singular functions, this viewpoint is useful when the preserved coordinates can be aligned with the
singular set.  If an affine singular set has dimension $m$ in $\Re^d$, then its transverse codimension is $c=d-m$; the
local kernel $r_\perp^{-\beta}$ is integrable for $0<\beta<c$, and transverse scaling leaves the radial power
$\rho^{c-1-\beta}$.  The point-singularity map corresponds to $m=0$, the straight-line case above corresponds to $m=1$
and $d=3$, and a plane-collapse map in three dimensions would correspond to $m=2$ and $d=3$.  The same
coordinate-preserving viewpoint also includes nonsingular integration rules.  In particular, the planar
Sommariva--Vianello Gauss--Green and product Gauss constructions are related $m=1$, $d=2$ members of the family: they
preserve a boundary coordinate and use boundary information to generate canonical cubature rules without first forming
an area mesh \cite{Sommariva:2007:PGC,Sommariva:2009:GGC}.

\subsection{Practical use}\label{sec:singular-practical}

The preceding formulas, together with the distance-transformation results for near-singular boundary integrals
\cite{Ma:2002:DTN,Lv:2019:ASD,Chin:2021:SBC}, suggest the following strategy.  For point singularities, choose the
volume scaling center at the singular point whenever the oriented swept sectors are admissible and the integrand is
defined on them.  For affine singular sets of positive dimension, align the coordinate system with tangent and
transverse directions and scale only the transverse coordinates.  In both cases, if the singular set intersects or
nearly intersects a boundary patch, the transformation reduces or cancels the radial singularity in the swept coordinate
but does not remove singular behavior that remains in the boundary parametrization.  The affected patch can then be
treated with a local parameter transformation, with subdivision used if needed to isolate the difficult feature.  The
point and line singularity examples in \sref{sec:ex-point-singularities}--\sref{sec:ex-line-singularities} exercise
these choices: the point-source examples include a face-level transfer for a near-boundary source, while the line-source
examples compare the untransformed and transferred boundary-patch rules as a line approaches a boundary patch.

Table~\ref{tab:singular-radial-powers} summarizes the radial powers produced by the transformations.  The exponent is
the power of the scaling coordinate that remains after multiplying the singular kernel by the corresponding Jacobian.
Nonnegative integer exponents are convenient for polynomial exactness with ordinary Gauss rules.  For fractional
exponents, Gauss--Jacobi weighting avoids increasing the degree of the composed integrand; this option is used in the
examples below and was also effective in the planar SBC setting~\cite{Chin:2021:SBC}.

\begin{table}[t]
  \centering
  \caption{Radial powers remaining after scaled boundary transformations for representative weak singularities.}
  \label{tab:singular-radial-powers}
  \begin{tabular}{llll}
    \hline
    Singularity & Setting & Transformation & Remaining radial power \\
    \hline
    point $r^{-\beta}$ & $d$ & standard SB & $\rho^{d-1-\beta}$ \\
    point $r^{-\beta}$ & $d$ & generalized SB, $\rho=\xi^\alpha$ & $\xi^{\alpha(d-\beta)-1}$ \\
    affine set $r_\perp^{-\beta}$ & codim. $c$ & transverse scaling & $\rho^{c-1-\beta}$ \\
    affine set $r_\perp^{-\beta}$ & codim. $c$ & generalized transverse scaling & $\xi^{\alpha(c-\beta)-1}$ \\
    \hline
  \end{tabular}
\end{table}

\section{Numerical results}\label{sec:results}

This section reports numerical studies that verify the smooth and singular cubature constructions introduced above.  The
first examples establish the baseline behavior of the tensor-product scaled-boundary rule on smooth curved solids.  The
remaining examples examine affine polyhedra and nonconvexity, a four-dimensional affine polytope, and weakly singular
integrands.

\subsection{Curved surface-bounded solids}\label{sec:ex-curved-solids}

We first consider two solids whose oriented boundary patches are swept from the origin.  The first is a watertight
quadratic B-spline pillow made from six tensor-product patches.  Let $\vm{h}=(1.35,1.00,0.75)$ denote the half widths of
the underlying rectangular box in the $\vm{e}_1$, $\vm{e}_2$, and $\vm{e}_3$ directions, and set $\delta=0.35$.  Define
the half-axis vectors $\vm{q}_k=h_k\vm{e}_k$.  Each face is specified by an outward face vector $\vm{n}$ and two tangent
half-axis vectors $\vm{a}$ and $\vm{b}$, ordered so that $\vm{a}\times\vm{b}$ points outward.  The six triples are
\begin{equation}
  (\vm{n},\vm{a},\vm{b})\in\{
  (\vm{q}_1,\vm{q}_2,\vm{q}_3),
  (-\vm{q}_1,\vm{q}_3,\vm{q}_2),
  (\vm{q}_2,\vm{q}_3,\vm{q}_1),
  (-\vm{q}_2,\vm{q}_1,\vm{q}_3),
  (\vm{q}_3,\vm{q}_1,\vm{q}_2),
  (-\vm{q}_3,\vm{q}_2,\vm{q}_1)
  \}.
  \label{eq:pillow-face-triples}
\end{equation}
For each triple, the polynomial patch is
\begin{equation}
  \vm{s}(r,t)=\left[1+\delta(1-r^2)(1-t^2)\right]\vm{n}+r\vm{a}+t\vm{b},
  \qquad -1\le r,t\le 1 .
  \label{eq:pillow-surface}
\end{equation}
This polynomial form is the explicit evaluation of the same quadratic B-spline patch used in the computations.
Equivalently, with $r=2u-1$, $t=2v-1$, the open quadratic knot vector $\{0,0,0,1,1,1\}$, and Bernstein control
coordinates $r_i,t_j\in\{-1,0,1\}$, the control points are
\begin{equation}
  \vm{P}_{ij}=\vm{n}+r_i\vm{a}+t_j\vm{b}+4\delta\vm{n}\,\mathbf{1}_{i=1,j=1},
  \qquad i,j=0,1,2 .
  \label{eq:pillow-control-net}
\end{equation}
The second geometry is the quadratic NURBS torus from \cite{Chin:2020:AEM}, with major radius $3$ and minor radius $1$.
The geometries are shown in \fref{fig:ex-curved-solid-geometries}; the black curves show the control nets and the
interior points show a representative fourth-order scaled-boundary rule.  The B-spline pillow tests the polynomial case
in which the patch maps are polynomial, whereas the torus tests a rational surface representation for which polynomial
exactness is not expected.

For each nonzero knot span, we apply an $n$-point Gauss rule in the two surface parameters and an $n$-point Gauss rule
in the radial coordinate $\rho$.  The reported rule size is therefore $n\times n\times n$ per knot-span sector.  Two
smooth integrands are used:
\begin{equation}
  f_1(\vx) = x^2 + 2y^2 - 3z^2,
  \qquad
  f_2(\vx) = \exp(x+y+z).
\end{equation}
Reference values are computed with a higher-order scaled-boundary rule, except for the polynomial torus integral where
the analytic torus moments are used.  Errors are plotted in \fref{fig:ex-curved-solid-errors}, and the first rule orders
that reach selected tolerances are listed in \tref{tab:ex-curved-solid-thresholds}.

\begin{figure}[!htbp]
  \centering
  \begin{subfigure}[t]{0.47\linewidth}
    \centering
    \includegraphics[width=\linewidth]{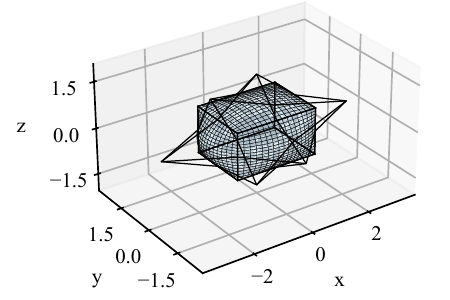}
    \caption{}
    \label{fig:ex-bspline-geometry}
  \end{subfigure}
  \hfill
  \begin{subfigure}[t]{0.47\linewidth}
    \centering
    \includegraphics[width=\linewidth]{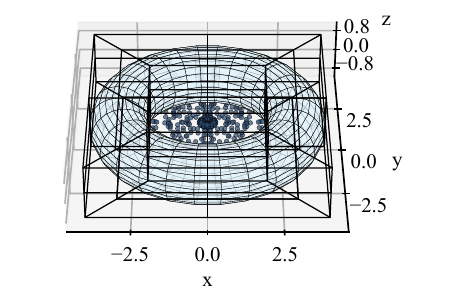}
    \caption{}
    \label{fig:ex-torus-geometry}
  \end{subfigure}
  \caption{Curved surface patches used for the smooth three-dimensional tests.  The black curves show the control nets,
  and the plotted interior points show a representative fourth-order scaled-boundary cubature rule.  (a) Quadratic
  B-spline pillow.  (b) Quadratic NURBS torus.}
  \label{fig:ex-curved-solid-geometries}
\end{figure}

\begin{figure}[!htbp]
  \centering
  \begin{subfigure}[t]{0.47\linewidth}
    \centering
    \includegraphics[width=\linewidth]{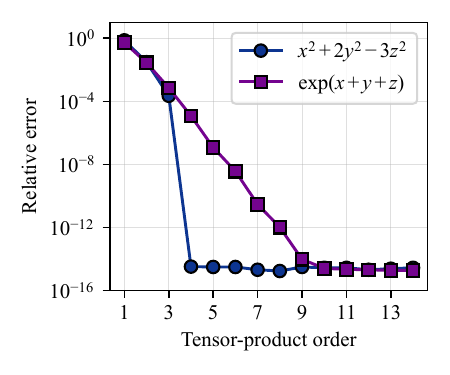}
    \caption{}
    \label{fig:ex-bspline-errors}
  \end{subfigure}
  \hfill
  \begin{subfigure}[t]{0.47\linewidth}
    \centering
    \includegraphics[width=\linewidth]{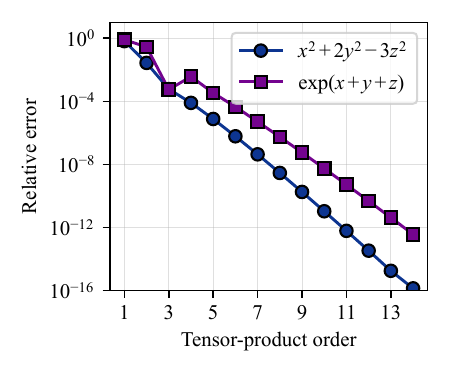}
    \caption{}
    \label{fig:ex-torus-errors}
  \end{subfigure}
  \caption{Convergence of the tensor-product scaled-boundary rule for smooth integrands over curved solids.  The
  B-spline pillow is polynomial and reaches machine precision for the polynomial integrand at modest order.  The NURBS
  torus exhibits rapid convergence but is not polynomial-exact in the tensor-product coordinates.  (a) Quadratic
  B-spline pillow.  (b) Quadratic NURBS torus.}
  \label{fig:ex-curved-solid-errors}
\end{figure}

\begin{table}[!htbp]
  \centering
  \caption{First tensor-product rule orders that reach selected error tolerances for the curved-solid tests.  Each entry
  denotes an $n\times n\times n$ rule on each nonzero knot-span sector.}
  \label{tab:ex-curved-solid-thresholds}
  \begin{tabular}{llccc}
    \hline
    & & \multicolumn{3}{c}{Relative error} \\
    \cline{3-5}
    Geometry & Integrand & $<10^{-4}$ & $<10^{-8}$ & $<10^{-12}$ \\
    \hline
    B-spline pillow & $x^2+2y^2-3z^2$ & $4\times4\times4$ & $4\times4\times4$ & $4\times4\times4$ \\
    B-spline pillow & $\exp(x+y+z)$ & $4\times4\times4$ & $6\times6\times6$ & $9\times9\times9$ \\
    NURBS torus & $x^2+2y^2-3z^2$ & $4\times4\times4$ & $8\times8\times8$ & $11\times11\times11$ \\
    NURBS torus & $\exp(x+y+z)$ & $6\times6\times6$ & $10\times10\times10$ & $14\times14\times14$ \\
    \hline
  \end{tabular}
\end{table}

The B-spline pillow polynomial case reaches roundoff once the rule is sufficiently high to integrate the polynomial
mapped integrand.  The exponential integrand is not polynomial, but the error decreases rapidly as the tensor-product
order is increased.  On the torus, the rational NURBS map destroys polynomial exactness even for $f_1$, so the
polynomial and exponential tests both show convergence rather than exactness at a fixed low order.  These results are
consistent with the discussion in \sref{sec:sbc}: polynomial exactness is available for polynomial patch maps and
polynomial integrands, whereas rational patch maps are handled by ordinary smooth quadrature convergence.

\FloatBarrier

\subsection{Affine polyhedra and nonconvexity}\label{sec:ex-affine-polyhedra}

We next test the affine specialization of the rule on polyhedra with polygonal and triangular faces.  The examples in
\fref{fig:ex-polyhedra-rules} include the Szilassi polyhedron, a truncated hexagonal trapezohedron, the Echidnahedron,
and the dodecahedron-small triambic icosahedron compound; these polyhedral shapes are taken from \cite{Chin:2020:AEM}.
The plotted points are produced by a $4\times4\times3$ rule on each tetrahedral sector generated by the nested map in
\sref{sec:sbc}.  This rule integrates polynomials of degree at most five on each sector because the Jacobian contributes
the factor $\rho^2\eta$ and the affine coordinate map leaves a polynomial integrand polynomial in $(\rho,\eta,t)$.

\begin{figure}[!htbp]
  \centering
  \begin{subfigure}[t]{0.47\linewidth}
    \centering
    \includegraphics[width=\linewidth]{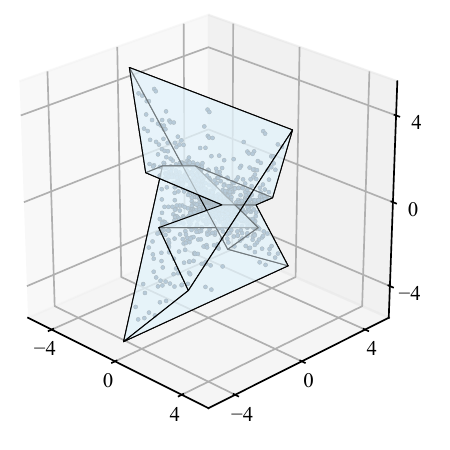}
    \caption{}
    \label{fig:ex-szi-rule}
  \end{subfigure}
  \hfill
  \begin{subfigure}[t]{0.47\linewidth}
    \centering
    \includegraphics[width=\linewidth]{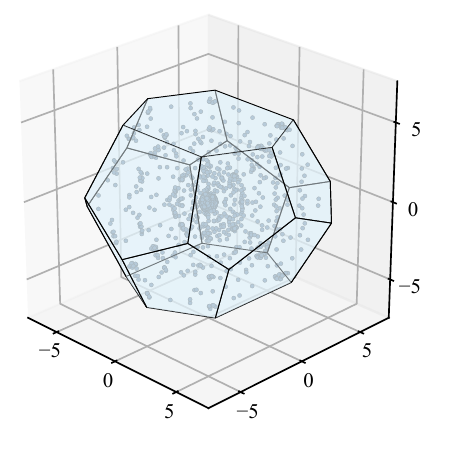}
    \caption{}
    \label{fig:ex-tht-rule}
  \end{subfigure}
  \begin{subfigure}[t]{0.47\linewidth}
    \centering
    \includegraphics[width=\linewidth]{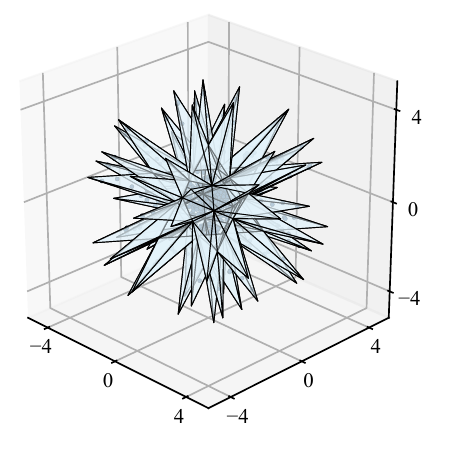}
    \caption{}
    \label{fig:ex-echid-rule}
  \end{subfigure}
  \hfill
  \begin{subfigure}[t]{0.47\linewidth}
    \centering
    \includegraphics[width=\linewidth]{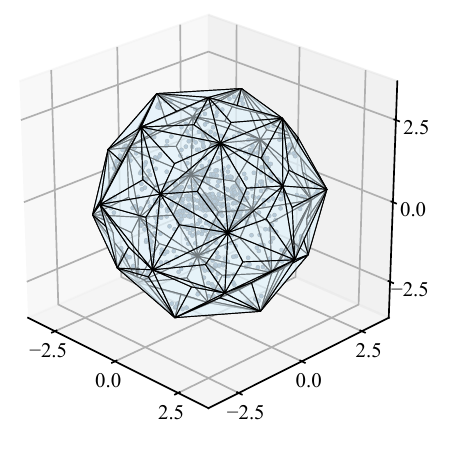}
    \caption{}
    \label{fig:ex-dstic-rule}
  \end{subfigure}
  \caption{Affine-polyhedron examples with representative $4\times4\times3$ scaled-boundary cubature points.  Each
  polygonal face is triangulated by the nested face map, and each face triangle is then scaled from the chosen center.
  (a) Szilassi polyhedron.  (b) Truncated hexagonal trapezohedron.  (c) Echidnahedron.  (d) Dodecahedron-small triambic
  icosahedron compound.}
  \label{fig:ex-polyhedra-rules}
\end{figure}

To verify exactness, we integrate every monomial $x^a y^b z^c$ with $a+b+c\le 5$.  Reference values are computed
analytically on the same oriented tetrahedral sector decomposition by expanding each affine coordinate map in
barycentric coordinates.  The largest error over all tested monomials is shown in \tref{tab:ex-polyhedron-exactness}.
The errors are at roundoff level for all four geometries, including the nonconvex Szilassi polyhedron.  This confirms
that the nested affine formula achieves the expected polynomial exactness when the boundary orientation is consistent.

\begin{table}[!htbp]
  \centering
  \caption{Polynomial exactness test for the $4\times4\times3$ affine-polyhedron rule.  The table reports the largest
  error over all monomials of total degree at most five.}
  \label{tab:ex-polyhedron-exactness}
  \begin{tabular}{lccc}
    \hline
    Geometry & Cubature points & Max. absolute error & Max. relative error \\
    \hline
    Szilassi polyhedron & 1344 & $1.05\times10^{-11}$ & $2.83\times10^{-15}$ \\
    Truncated hexagonal trapezohedron & 2112 & $1.46\times10^{-10}$ & $1.37\times10^{-13}$ \\
    Echidnahedron & 8640 & $3.41\times10^{-13}$ & $2.18\times10^{-14}$ \\
    Dodecahedron-small triambic icosahedron & 8640 & $1.82\times10^{-12}$ & $2.28\times10^{-13}$ \\
    \hline
  \end{tabular}
\end{table}

The same sector formula can produce signed weights when the scaling center is not star-convex with respect to the
oriented boundary.  To quantify this effect, \tref{tab:ex-polyhedron-weight-metrics} reports $\sum_q |w_q|/|\sum_q w_q|$
and the number of negative weights for the origin-centered rules shown in \fref{fig:ex-polyhedra-rules}.  The truncated
hexagonal trapezohedron, Echidnahedron, and dodecahedron-small triambic icosahedron have positive weights for this
center.  The Szilassi polyhedron, however, has both positive and negative sectors, and the sum of absolute weights is
about $1.55$ times the net oriented volume.  The exactness test is still satisfied, but the signed weights indicate the
cancellation that can arise in nonconvex or non-star-convex settings.

\begin{table}[!htbp]
  \centering
  \caption{Signed-weight diagnostics for the origin-centered $4\times4\times3$ affine-polyhedron rules.}
  \label{tab:ex-polyhedron-weight-metrics}
  \begin{tabular}{lccc}
    \hline
    Geometry & $\sum_q |w_q|/|\sum_q w_q|$ & Positive weights & Negative weights \\
    \hline
    Szilassi polyhedron & 1.546 & 672 & 672 \\
    Truncated hexagonal trapezohedron & 1.000 & 2112 & 0 \\
    Echidnahedron & 1.000 & 8640 & 0 \\
    Dodecahedron-small triambic icosahedron & 1.000 & 8640 & 0 \\
    \hline
  \end{tabular}
\end{table}

\FloatBarrier

\subsection{Four-dimensional affine polytope}\label{sec:ex-affine-tesseract}

The preceding examples focus on three-dimensional regions.  To exercise the higher-dimensional recursive construction
more directly, we also integrate over a rotated and sheared affine image of the tesseract $[-1,1]^4$.  The map has the
form
\begin{equation}
  \vx = \vm{A} \hat{\vx} + \vm{b},
  \qquad \hat{\vx}\in[-1,1]^4,
\end{equation}
with
\begin{equation}
  \vm{A}=\begin{pmatrix}
     1.03736796 & -0.19598887 & -0.27921549 & -0.06768860 \\
     0.49957008 &  0.90454137 & -0.13446309 & -0.03259711 \\
     0.31769041 &  0.05956695 &  0.99179131 &  0.51405037 \\
    -0.11562985 & -0.02168060 & -0.36098252 &  0.66424318
  \end{pmatrix},
  \qquad
  \vm{b}=\begin{pmatrix}0.20\\-0.10\\0.15\\0.05\end{pmatrix}.
  \label{eq:tesseract-affine-map}
\end{equation}
Equivalently, the matrix is generated as
\begin{equation}
  \vm{A}=\vm{R}_{12}(\pi/7)\,\vm{R}_{34}(-\pi/9)\,\vm{R}_{13}(\pi/11)\,\vm{H}\,\vm{D},
\end{equation}
where $\vm{R}_{ij}(\theta)$ denotes a rotation by angle $\theta$ in the $(i,j)$ coordinate plane,
\begin{equation}
  \vm{D}=\operatorname{diag}(1.20,0.90,1.10,0.80),
  \qquad
  \vm{H}=\begin{pmatrix}
    1 & 1/4 & 0 & 0 \\
    0 & 1 & 0 & 0 \\
    0 & 0 & 1 & 1/3 \\
    0 & 0 & 0 & 1
  \end{pmatrix}.
\end{equation}
Thus the reference tesseract is first anisotropically scaled, then sheared in two coordinate directions, and finally
rotated in the $(1,3)$, $(3,4)$, and $(1,2)$ coordinate planes.  The rotations are not required by the construction;
they are included only to make the affine image oblique to the coordinate axes while preserving a simple determinant.
The nested SBC rule uses the image of the origin as the scaling center, sweeps to each cubical three-dimensional facet,
triangulates each facet into six tetrahedra, and applies the nested affine-sector map on the resulting four-simplices.
Tensor-product Gauss quadrature on the reference tesseract is used as an independent verification rule.

For the chosen affine map, the exact volume is $16|\det\vm{A}|=15.2064$.  On each four-simplex sector, the volume
Jacobian is proportional to $a^3b^2c$ in the nested coordinates $(a,b,c,d)$, so a $2\times2\times1\times1$ Gauss rule is
already exact for volume.  This minimal nested SBC rule evaluates to $15.206400000000002$, while a sixteenth-order
tensor-product rule returns $15.206400000000002$.  Thus both absolute volume errors are $3.6\times10^{-15}$.  A
fifth-order nested rule is then used for the moment study; it integrates all monomial moments through total degree five
to roundoff when compared with the tensor-product verification values, as shown in \tref{tab:ex-tesseract-moments}.

\begin{table}[!htbp]
  \centering
  \caption{Maximum moment errors for the rotated and sheared affine tesseract.  The nested SBC rule uses order five in
  each tensor-product coordinate and is compared with a sixteenth-order tensor-product rule on the reference tesseract.}
  \label{tab:ex-tesseract-moments}
  \begin{tabular}{cccc}
    \hline
    Total degree & Number of moments & Max absolute error & Max relative error \\
    \hline
    0 & 1 & $2.13\times10^{-14}$ & $1.40\times10^{-15}$ \\
    1 & 4 & $3.11\times10^{-15}$ & $1.36\times10^{-15}$ \\
    2 & 10 & $3.55\times10^{-15}$ & $1.33\times10^{-15}$ \\
    3 & 20 & $6.22\times10^{-15}$ & $2.19\times10^{-15}$ \\
    4 & 35 & $1.07\times10^{-14}$ & $2.27\times10^{-15}$ \\
    5 & 56 & $1.15\times10^{-14}$ & $2.33\times10^{-15}$ \\
    \hline
  \end{tabular}
\end{table}

\begin{figure}[!htbp]
  \centering
  \begin{subfigure}[t]{0.48\linewidth}
    \centering
    \includegraphics[width=\linewidth]{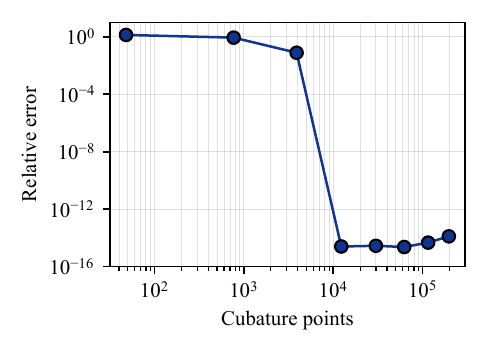}
    \caption{}
    \label{fig:ex-tesseract-moment-convergence}
  \end{subfigure}
  \hfill
  \begin{subfigure}[t]{0.48\linewidth}
    \centering
    \includegraphics[width=\linewidth]{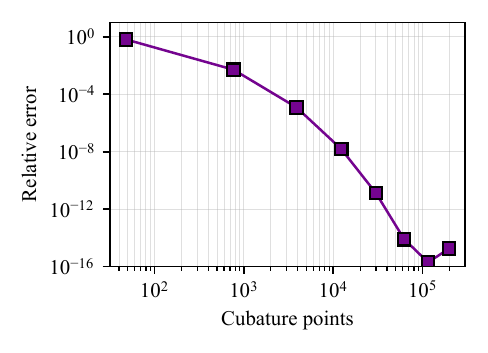}
    \caption{}
    \label{fig:ex-tesseract-exponential-convergence}
  \end{subfigure}
  \caption{Convergence of the nested SBC rule on the rotated and sheared affine tesseract, using tensor-product Gauss
  quadrature on the reference tesseract as the verification rule.  (a) Maximum error over monomial moments of total
  degree at most five.  (b) Error for $\exp(\vm{c}\cdot\vx)$ with a fixed coefficient vector $\vm{c}$.}
  \label{fig:ex-tesseract-convergence}
\end{figure}

The convergence in \fref{fig:ex-tesseract-convergence} is consistent with the polynomial exactness of the nested affine
map.  The maximum error over all moments of total degree at most five reaches roundoff by order four.  For the smooth
nonpolynomial test $\exp(\vm{c}\cdot\vx)$, the relative error decreases from $5.1\times10^{-3}$ at order two to
$1.5\times10^{-8}$ at order four, $1.3\times10^{-11}$ at order five, and roundoff by order seven.  This example verifies
that the recursive affine-polytope construction used in three dimensions extends directly to a non-simplex polytope in
four dimensions.

\FloatBarrier

\subsection{Point singularities}\label{sec:ex-point-singularities}

We now separate the point-source tests into three regimes.  The first tests ordinary scaled-boundary cancellation for
integer powers, the second tests generalized radial scaling for fractional powers, and the third tests the residual
near-boundary factor that remains on a face after the volume scaling has been applied.

\subsubsection{Integer powers away from the boundary}\label{sec:ex-point-integer}

The basic cancellation test uses
\begin{equation}
  I_\beta = \int_\Omega \frac{1}{\|\vx\|^\beta}\,d\vx,
  \qquad \beta\in\{1,2\},
\end{equation}
with the singular point used as the scaling center, $\vx_c=\vx_0=\vm{0}$.  The point is well separated from each boundary
patch of the truncated hexagonal trapezohedron (THT) and the B-spline pillow.  Therefore the only singular factor exposed
by the scaled-boundary map is the radial power $\rho^{2-\beta}$.  For $\beta=1$ this is the polynomial factor $\rho$, and
for $\beta=2$ it is the constant factor $1$.  Ordinary Gauss quadrature in the radial coordinate is therefore sufficient;
generalized scaling or Gauss--Jacobi weighting would give the same radial rule for these two cases.
Reference values are computed by integrating the radial factor analytically, giving $1/(3-\beta)$, and multiplying by a
high-order surface quadrature approximation of the remaining smooth boundary integral.

To separate the radial requirement from the cost of resolving the boundary patches, we next fix the radial rule at
$n_\rho=1$ and vary only the surface order.  The reference values use a high-order surface rule and the exact radial
factor $1/(3-\beta)$.  Since one Gauss point integrates both remaining radial factors exactly, the convergence in
\fref{fig:ex-point-integer-surface} is controlled by the surface quadrature alone.  The THT reaches the $10^{-8}$ and
$10^{-12}$ tolerances at surface orders 8 and 13 for $\beta=1$, and 9 and 14 for $\beta=2$.  The B-spline pillow reaches
$10^{-8}$ at orders 11 and 12 and $10^{-12}$ at orders 16 and 17 for $\beta=1$ and $\beta=2$, respectively.  The B-spline
panel is extended to surface order 24, where the relative errors are about $1.4\times10^{-16}$ and $6.2\times10^{-16}$
for the two kernels.  The higher order for the B-spline pillow is still inexpensive relative to the THT rule: the THT
boundary decomposition produces 44 tetrahedral sectors, so a $16\times16$ surface rule is applied 44 times, whereas the
B-spline case uses six tensor-product patch rules.

\begin{figure}[!htbp]
  \centering
  \begin{subfigure}[t]{0.48\linewidth}
    \centering
    \includegraphics[width=\linewidth]{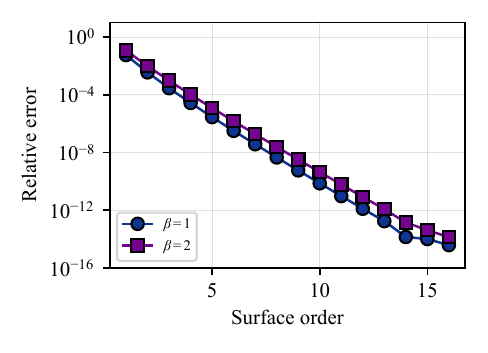}
    \caption{}
    \label{fig:ex-point-integer-surface-tht}
  \end{subfigure}
  \hfill
  \begin{subfigure}[t]{0.48\linewidth}
    \centering
    \includegraphics[width=\linewidth]{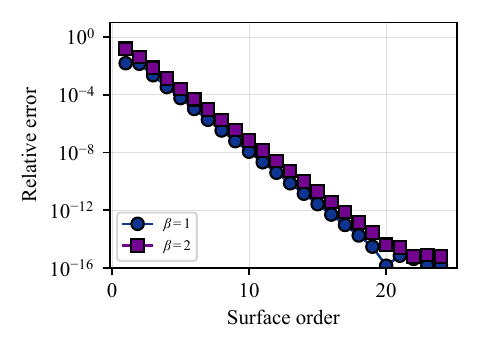}
    \caption{}
    \label{fig:ex-point-integer-surface-bspline}
  \end{subfigure}
  \caption{Surface-order convergence for integer point-singular kernels with the radial order fixed at $n_\rho=1$.  The
  single radial point is sufficient for the $\beta=1$ and $\beta=2$ kernels, so the remaining error comes from the
  boundary-patch integration.  The B-spline panel is run to surface order 24 to show convergence to near-roundoff error.
  (a) Truncated hexagonal trapezohedron.  (b) B-spline pillow.}
  \label{fig:ex-point-integer-surface}
\end{figure}

\begin{figure}[!htbp]
  \centering
  \begin{subfigure}[t]{0.48\linewidth}
    \centering
    \includegraphics[width=\linewidth]{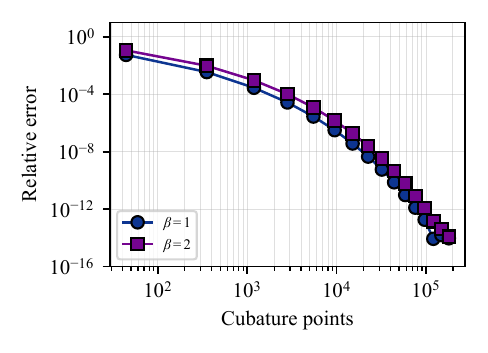}
    \caption{}
    \label{fig:ex-point-integer-standard-tht}
  \end{subfigure}
  \hfill
  \begin{subfigure}[t]{0.48\linewidth}
    \centering
    \includegraphics[width=\linewidth]{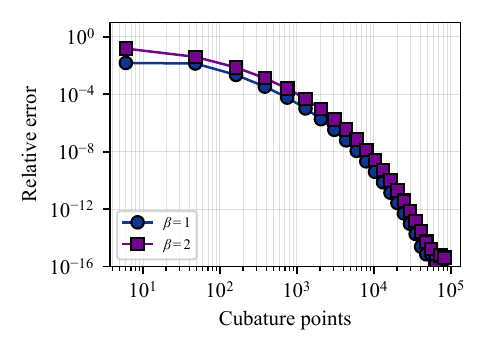}
    \caption{}
    \label{fig:ex-point-integer-standard-bspline}
  \end{subfigure}
  \caption{Standard scaled-boundary convergence for integer point-singular kernels with the singularity far from the
  boundary patches.  For $\beta=1$ and $\beta=2$, the remaining radial powers are polynomial, so no generalized radial
  treatment is needed.  (a) Truncated hexagonal trapezohedron.  (b) B-spline pillow.}
  \label{fig:ex-point-integer-standard}
\end{figure}

The total-cubature plot in \fref{fig:ex-point-integer-standard} shows the same trend with cost measured by the number of
points rather than by the surface order.  The THT panel uses orders through 16 on each tetrahedral sector, while the
B-spline panel is extended through surface order 24 on each of the six pillow patches.  The comparison confirms that,
for these integer kernels away from the boundary, the radial singularity has been removed by the SB map and the observed
convergence is
governed by the boundary quadrature.

\subsubsection{Fractional powers away from the boundary}\label{sec:ex-point-fractional}

The generalized scaled-boundary map is more relevant for fractional powers.  We next use
$\beta\in\{0.5,1.5,2.5\}$ on the same two geometries, again with the singular point far from the boundary.  The standard
map leaves a non-polynomial radial factor $\rho^{2-\beta}$.  The generalized map sets $\rho=\xi^\alpha$ and chooses the
smallest integer $\alpha$ such that $\alpha(3-\beta)-1$ is a nonnegative integer; for these half-integer examples,
$\alpha=2$.  We compare this with a Gauss--Jacobi radial rule that keeps the standard map but integrates the weight
$\rho^{2-\beta}$ directly.

As in the integer-power study, the homogeneity of the kernel lets us separate the radial and surface contributions.
The exact radial factor is again $1/(3-\beta)$.  Generalized SB makes the transformed radial factor polynomial for these
half-integer powers, while Gauss--Jacobi integrates the original radial weight directly; in either case the remaining
convergence is governed by the boundary-patch quadrature.  To show this, \fref{fig:ex-point-fractional-surface} fixes
the radial contribution analytically and varies only the surface order.

\begin{figure}[!htbp]
  \centering
  \begin{subfigure}[t]{0.48\linewidth}
    \centering
    \includegraphics[width=\linewidth]{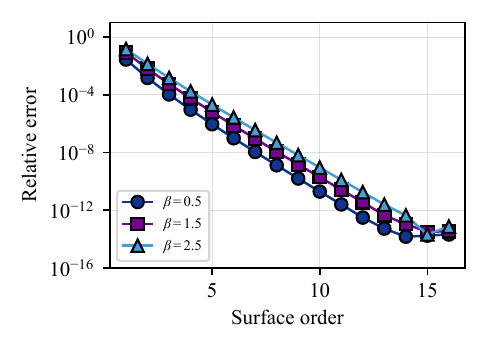}
    \caption{}
    \label{fig:ex-tht-point-fractional-surface-errors}
  \end{subfigure}
  \hfill
  \begin{subfigure}[t]{0.48\linewidth}
    \centering
    \includegraphics[width=\linewidth]{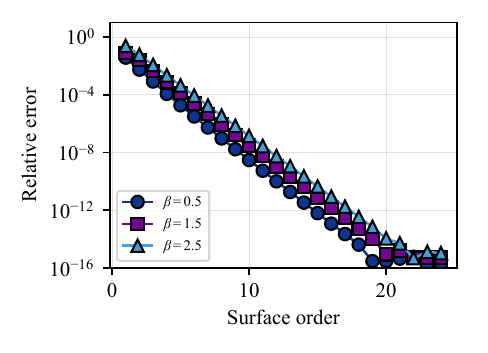}
    \caption{}
    \label{fig:ex-bspline-point-fractional-surface-errors}
  \end{subfigure}
  \caption{Surface-order convergence for fractional point-singular kernels after the homogeneous radial factor is
  integrated exactly.  The THT panel uses surface orders through 16 on each tetrahedral sector, while the B-spline panel
  uses orders through 24 on each of the six tensor-product pillow patches.  (a) Truncated hexagonal trapezohedron.
  (b) B-spline pillow.}
  \label{fig:ex-point-fractional-surface}
\end{figure}

The surface-order results reach the same thresholds as the generalized and Gauss--Jacobi curves in the total-cost plot.
On the THT, the $10^{-8}$ thresholds occur at orders 8, 9, and 9 for $\beta=0.5,1.5,2.5$, respectively, and the
$10^{-12}$ thresholds occur at orders 12, 13, and 14.  On the B-spline pillow, the $10^{-8}$ thresholds occur at orders
10, 11, and 12, and the $10^{-12}$ thresholds occur at orders 15, 17, and 18.

\begin{figure}[!htbp]
  \centering
  \begin{subfigure}[t]{\linewidth}
    \centering
    \makebox[\linewidth][c]{\includegraphics{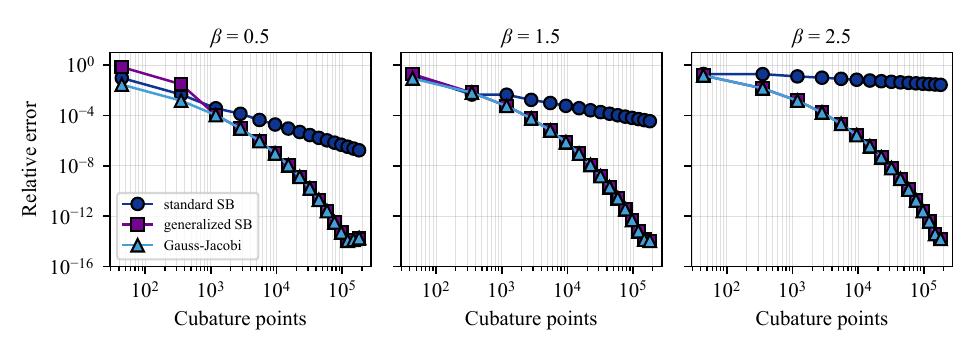}}
    \caption{}
    \label{fig:ex-tht-point-fractional-errors}
  \end{subfigure}
  \par\medskip
  \begin{subfigure}[t]{\linewidth}
    \centering
    \makebox[\linewidth][c]{\includegraphics{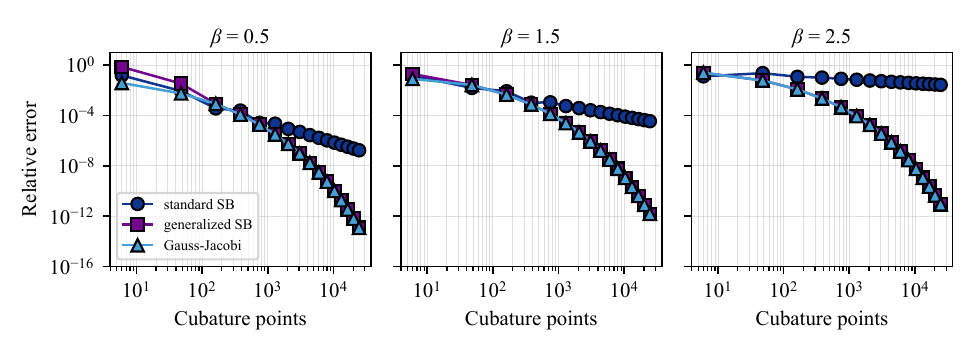}}
    \caption{}
    \label{fig:ex-bspline-point-fractional-errors}
  \end{subfigure}
  \caption{Convergence for fractional point-singular kernels $\|\vx\|^{-\beta}$ with the singular point used as the
  scaling center.  The standard SB rule must approximate a fractional radial power, while generalized SB and
  Gauss--Jacobi quadrature remove or weight that factor directly.  (a) Truncated hexagonal trapezohedron.  (b) B-spline
  pillow.}
  \label{fig:ex-point-fractional-errors}
\end{figure}

The fractional-power results in \fref{fig:ex-point-fractional-errors} isolate the benefit of the generalized transform.
Unlike the surface-order diagnostic in \fref{fig:ex-point-fractional-surface}, these curves use full tensor-product
rules: the same order is used in the surface coordinates and in the radial coordinate, and the horizontal axis reports
the resulting total number of cubature points.  For all three half-integer powers, the standard SB rule does not reach
$10^{-8}$ by order 16.  On the THT, both generalized SB and Gauss--Jacobi reach $10^{-8}$ at orders 8, 9, and 9 for
$\beta=0.5,1.5,2.5$, respectively, and $10^{-12}$ at orders 12, 13, and 14.  On the B-spline pillow, the corresponding
$10^{-8}$ thresholds are orders 10, 11, and 12; the surface-order diagnostic shows that the corresponding $10^{-12}$
thresholds occur at surface orders 15, 17, and 18.

\subsubsection{Near-boundary point sources}\label{sec:ex-point-near-boundary}

The preceding tests remove the radial singularity, after which the remaining error is governed by the boundary-patch
integral.  Even when the source is not asymptotically close to the boundary, the surface integral contribution can still
require many quadrature points.  We now test a more localized version of the same effect using a single tetrahedral
sector.  As shown in \fref{fig:ex-near-face-geometry}, the point source is also the volume scaling center $\vx_0$ at one
vertex of the tetrahedron.  One tetrahedron edge is aligned with the segment from $\vx_0$ to the face scaling center, so
the singularity-aligned decomposition consists of a single tetrahedral sector.  The face scaling center is a vertex of
the opposite face, the two incident face edges have fixed length $L=2$, and the opposite face edge is placed a distance
$d$ from this point.  Thus, as $d\to0$, both the volume face and the opposite face edge approach the singularity, while
the perimeter of the face approaches $4L$. The outer SB map cancels the $1/r$ singularity in the volume radial
coordinate, but the remaining boundary integrand $\|\vm{s}-\vx_c\|^{-1}$ on the opposite face becomes nearly singular
near the face scaling center and the nearby edge.

The face scaling center is chosen as the closest point on the triangular face to the source.  The local SB map on the
face uses a face-radial coordinate $\eta$ from this center to the opposite edge and an edge coordinate $\tau$ along that
edge.  The $t$-transfer is applied only to the edge coordinate $\tau$.  In the face-radial direction, the local SB surface Jacobian
already supplies the factor $\eta$; in the limiting case $d=0$, this exactly cancels the $1/\eta$ behavior of the
remaining boundary factor.  On the opposite edge, the residual edge-direction distance has the limiting form
$r(\tau)=\sqrt{\ell^2+\tau^2}$, where $\ell$ is the perpendicular distance from the face scaling center to the edge.  We
therefore compare ordinary Gauss quadrature in $\tau$ with the $t$-transfer used for near-singular edge factors in the
planar SBC scheme~\cite{Chin:2021:SBC}:
\begin{equation}
  \tilde{\tau}=\log\left(\tau+\sqrt{\ell^2+\tau^2}\right),
  \qquad
  \tau=\frac{1}{2}e^{-\tilde{\tau}}\left(e^{2\tilde{\tau}}-\ell^2\right).
\end{equation}

\begin{figure}[!htbp]
  \centering
  \begin{subfigure}[t]{0.49\linewidth}
    \centering
    \makebox[\linewidth][c]{\includegraphics{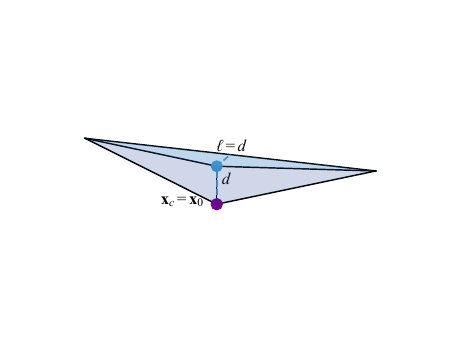}}
    \caption{}
    \label{fig:ex-near-face-geometry-3d}
  \end{subfigure}
  \hfill
  \begin{subfigure}[t]{0.49\linewidth}
    \centering
    \makebox[\linewidth][c]{\includegraphics{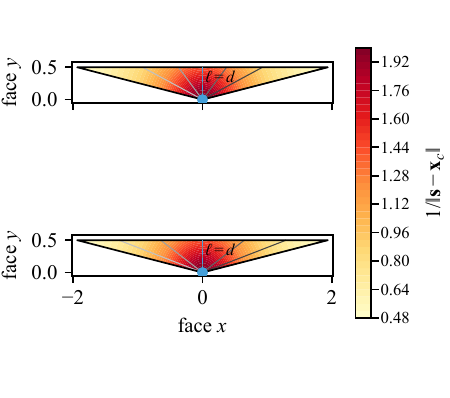}}
    \caption{}
    \label{fig:ex-near-face-heatmap}
  \end{subfigure}
  \caption{Near-boundary point-source geometry.  The 3D panel shows the single tetrahedral sector and the equal distances
  $d$ and $\ell=d$.  The face panel shows the remaining boundary factor $\|\vm{s}-\vx_c\|^{-1}$ and compares isocontours
  of the transferred and untransformed edge coordinates; darker contour lines correspond to larger coordinate values.
  (a) Tetrahedral sector geometry.  (b) Face factor with $\tilde{\tau}$ contours on top and untransformed $\tau$ contours
  below.}
  \label{fig:ex-near-face-geometry}
\end{figure}

\begin{figure}[!htbp]
  \centering
  \begin{subfigure}[t]{0.48\linewidth}
    \centering
    \makebox[\linewidth][c]{\includegraphics{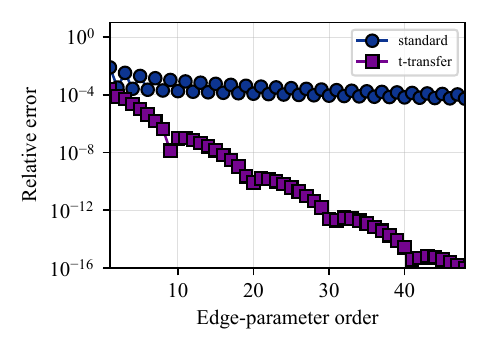}}
    \caption{}
    \label{fig:ex-near-face-convergence-d1em2}
  \end{subfigure}
  \hfill
  \begin{subfigure}[t]{0.48\linewidth}
    \centering
    \makebox[\linewidth][c]{\includegraphics{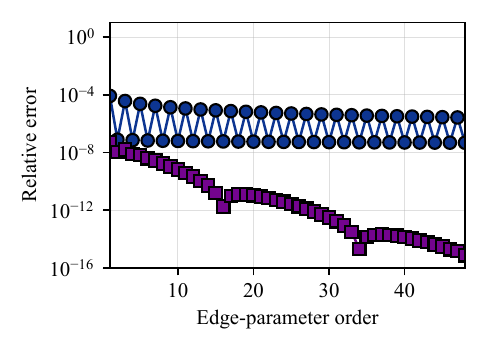}}
    \caption{}
    \label{fig:ex-near-face-convergence-d1em4}
  \end{subfigure}
  \caption{Near-boundary point-source convergence with increasing edge-direction order.  The face-radial coordinate
  is integrated with 720 Gauss points, so the plotted errors primarily reflect edge-direction quadrature.  The
  $t$-transfer reduces the near-singularity left on the face when the opposite edge approaches the source.  (a) $d=10^{-2}$.  (b) $d=10^{-4}$.}
  \label{fig:ex-near-face-convergence}
\end{figure}

\begin{figure}[!htbp]
  \centering
  \begin{subfigure}[t]{0.48\linewidth}
    \centering
    \makebox[\linewidth][c]{\includegraphics{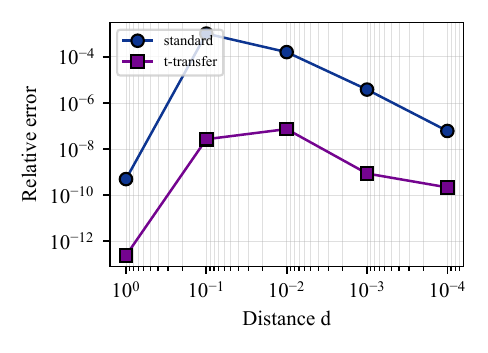}}
    \caption{}
    \label{fig:ex-near-face-distance-order12}
  \end{subfigure}
  \hfill
  \begin{subfigure}[t]{0.48\linewidth}
    \centering
    \makebox[\linewidth][c]{\includegraphics{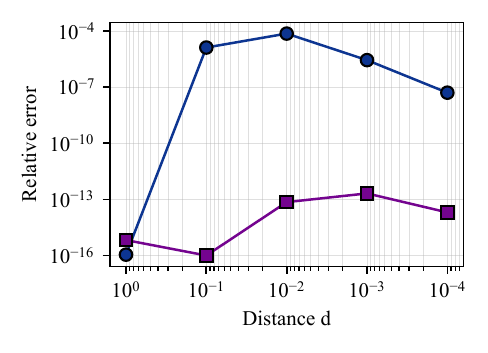}}
    \caption{}
    \label{fig:ex-near-face-distance-order36}
  \end{subfigure}
  \caption{Near-boundary point-source error as the source approaches the face.  The separation between the standard
  edge rule and the $t$-transfer increases when a sufficiently high edge-direction order is used.  (a) Edge-direction
  order 12.  (b) Edge-direction order 36.}
  \label{fig:ex-near-face-distance}
\end{figure}

This test confirms that the volume radial cancellation and the face-level transfer address different factors.  At $d=1$,
where the source is not close to the face, the standard rule reaches the $10^{-8}$ and $10^{-12}$ tolerances at
orders 11 and 16, while the $t$-transfer reaches them at orders 7 and 11.  At $d=10^{-1}$, the standard edge rule does
not reach $10^{-8}$ over the tested range of edge orders, whereas the $t$-transfer reaches the $10^{-8}$ and $10^{-12}$
tolerances at orders 13 and 25.  At $d=10^{-2}$ and $d=10^{-4}$, the standard rule again does not reach $10^{-8}$, while
the $t$-transfer reaches $10^{-8}$ at orders 16 and 4 and $10^{-12}$ at orders 30 and 28, respectively.  This shows that
the angular transfer is useful when true point singularities leave large, localized boundary contributions that would
otherwise be poorly resolved.

\FloatBarrier

\subsection{Line singularities}\label{sec:ex-line-singularities}

We next test the transverse scaling map from \sref{sec:line}.  The test integrals are
\begin{equation}
  I_\beta = \int_{[-1,1]^3}\frac{1}{(x^2+y^2)^{\beta/2}}\,d\vx,
  \qquad \beta\in\{0.5,1,1.5\},
\end{equation}
where the singular line is the $z$-axis.

\subsubsection{Centered line}\label{sec:ex-line-centered}

The centered-line test uses the cube because the vertical side faces are star-convex with respect to the line, while the
top and bottom faces have zero measure under the transverse sweep.  We compare four rules: ordinary volume SB cubature
with scaling center on the line, the transverse scaling map with ordinary Gauss quadrature in the transverse coordinate,
generalized transverse scaling, and the same transverse scaling map with Gauss--Jacobi quadrature for the transverse
factor $\rho^{1-\beta}$.  Reference values are computed with a high-order Gauss--Jacobi transverse rule.

\begin{figure}[!htbp]
  \centering
  \makebox[\linewidth][c]{\includegraphics{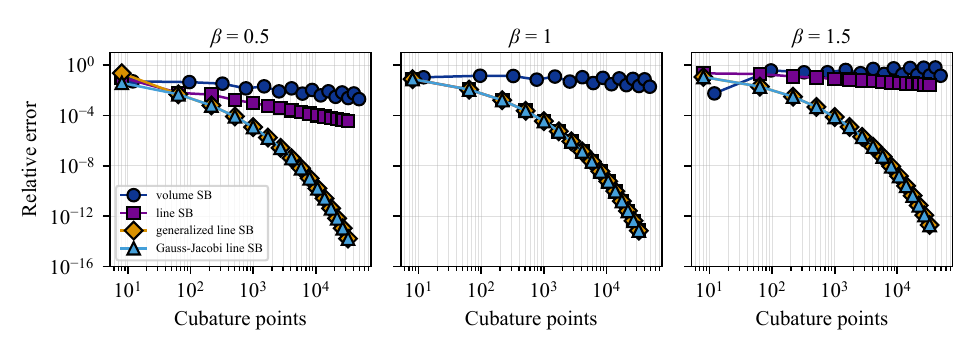}}
  \caption{Convergence for centered line-singular kernels on the cube.  The transverse scaling map isolates the line singularity in the transverse coordinate.  For fractional transverse powers, generalized transverse scaling or Gauss--Jacobi quadrature
  removes the remaining noninteger radial factor.}
  \label{fig:ex-line-singularity-convergence}
\end{figure}

The centered-line convergence in \fref{fig:ex-line-singularity-convergence} follows the radial powers in
\tref{tab:singular-radial-powers}.  The ordinary transverse scaling map leaves the transverse factor $\rho^{1-\beta}$; this is a
constant for $\beta=1$, but fractional for $\beta=0.5$ and singular for $\beta=1.5$.  Generalized transverse scaling,
with $\rho=\xi^\alpha$, and the Gauss--Jacobi rule remove or weight that factor directly, giving the fastest convergence
for the fractional cases.
The ordinary volume SB rule converges slowly here because the top and bottom faces intersect the singular line in their
face parameters, so the point-based sweep does not isolate the line singularity as cleanly as the transverse scaling map.

\begin{table}[!htbp]
  \centering
  \caption{First tensor-product orders reaching selected relative-error tolerances for the centered line-singularity
  tests.  A dash indicates that the tolerance was not reached by order 16.}
  \label{tab:ex-line-singularity-thresholds}
  \begin{tabular}{llcc}
    \hline
    & & \multicolumn{2}{c}{Relative error} \\
    \cline{3-4}
    Kernel & Treatment & $<10^{-8}$ & $<10^{-12}$ \\
    \hline
    $\beta=0.5$ & volume SB & -- & -- \\
    $\beta=0.5$ & transverse scaling & -- & -- \\
    $\beta=0.5$ & gen. transverse scaling / Gauss--Jacobi & 9 & 14 \\
    $\beta=1$ & volume SB & -- & -- \\
    $\beta=1$ & transverse scaling / gen. transverse scaling / Gauss--Jacobi & 10 & 15 \\
    $\beta=1.5$ & volume SB & -- & -- \\
    $\beta=1.5$ & transverse scaling & -- & -- \\
    $\beta=1.5$ & gen. transverse scaling / Gauss--Jacobi & 10 & 16 \\
    \hline
  \end{tabular}
\end{table}

\subsubsection{Near-boundary line}\label{sec:ex-line-near-boundary}

The second line test moves the singular line to $(1-d,0,z)$, leaving it inside the cube but bringing it close to the side
face $x=1$.  The transverse scaling map still removes the transverse swept-coordinate singularity, but on that side face the
remaining boundary factor is
\begin{equation}
  r_{\perp}^{-\beta}=\left(d^2+y^2\right)^{-\beta/2},
\end{equation}
where $y$ is measured along the face from the closest point to the line.  This is the same residual patch-parameter
near singularity identified in \sref{sec:line}.  For the $\beta=1$ kernel, we therefore apply the one-dimensional
$t$-transfer to the affected face coordinate,
\begin{equation}
  \tilde{y}=\log\left(y+\sqrt{d^2+y^2}\right),
  \qquad
  y=\frac{1}{2}e^{-\tilde{y}}\left(e^{2\tilde{y}}-d^2\right).
\end{equation}
Since $dy/d\tilde{y}=\sqrt{d^2+y^2}$, this transfer cancels the $1/\sqrt{d^2+y^2}$ factor on the near face.  The
transformation is applied only to the side face $x=1$; all other boundary faces use ordinary transverse-scaling surface quadrature.

\begin{figure}[!htbp]
  \centering
  \begin{subfigure}[t]{\linewidth}
    \centering
    \makebox[\linewidth][c]{\includegraphics{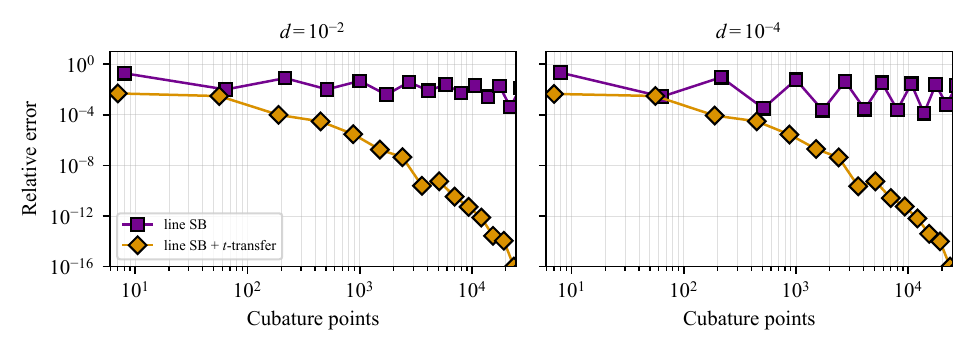}}
    \caption{}
    \label{fig:ex-line-near-boundary-convergence}
  \end{subfigure}
  \par\medskip
  \begin{subfigure}[t]{\linewidth}
    \centering
    \makebox[\linewidth][c]{\includegraphics{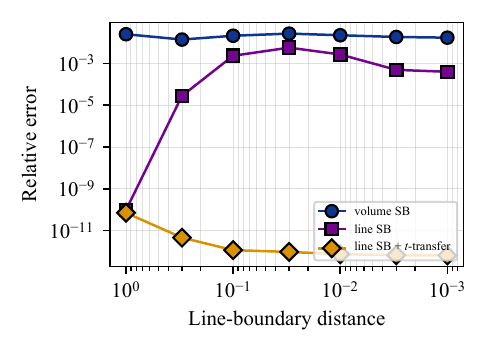}}
    \caption{}
    \label{fig:ex-line-near-boundary-distance}
  \end{subfigure}
  \caption{Near-boundary line-singularity tests for $\beta=1$.  The face-level $t$-transfer cancels the remaining
  near-singular factor on the side face $x=1$.  (a) Convergence with increasing cubature order for two line-boundary
  distances.  (b) Fixed-order error at order 12 as the line moves toward the side face $x=1$.}
  \label{fig:ex-line-near-boundary}
\end{figure}

The face-transferred rule removes the near-boundary loss of accuracy in \fref{fig:ex-line-near-boundary}.  At order 12, the
ordinary transverse-scaling rule has relative errors $2.7\times10^{-3}$ for $d=10^{-2}$ and $1.3\times10^{-4}$ for $d=10^{-4}$,
while the transferred rule yields errors below $10^{-12}$ in both cases.  In the distance sweep at the same order, the
transferred rule remains near $10^{-12}$ over the tested distances, whereas the untransferred transverse-scaling rule reaches $2.3\times10^{-3}$ at $d=10^{-1}$ and $5.6\times10^{-3}$ at $d=3\times10^{-2}$.  Thus the transverse scaling map handles
the line singularity in the swept coordinate, and the additional face-level transfer handles the near singularity that
remains in the affected boundary-patch parameter.

\FloatBarrier

\section{Conclusions}\label{sec:conclusion}

We have developed a higher-dimensional scaled boundary cubature framework for regions described by oriented boundary
representations.  The construction extends the planar SBC idea to compact regions in $\Re^d$ by sweeping each boundary
patch to a scaling center and integrating over the patch parameters together with a radial coordinate.  In three
dimensions, this yields a direct volume rule for solids bounded by affine faces, triangular patches, tensor-product
surface patches, B-spline patches, and NURBS patches.  For affine polytopes, recursively applying the same idea on each
facet produces nested tensor-product formulas over simplex sectors; in three dimensions this results in the
tetrahedral-sector formula for affine polyhedra.  Because the formulation is oriented, it also applies to nonconvex
geometries and to scaling centers that produce signed sector contributions.

The numerical examples support the main accuracy claims.  On affine polyhedra and on the four-dimensional tesseract, the
nested rule integrates polynomial data through the expected degree to near machine precision, including nonconvex and
highly faceted three-dimensional examples.  On curved solids, the B-spline and NURBS tests show rapid convergence for
both polynomial and nonpolynomial integrands, while retaining the same boundary-based construction.  These examples
demonstrate that the method is not tied to simplex or tensor-product cells in the physical domain; it only requires a
compatible oriented boundary representation and quadrature rules on the corresponding patch parameter domains.

We also introduced scaled-boundary transformations for weakly singular functions in three dimensions.  For point
singularities, choosing the singular point as the scaling center exposes the radial power in the Jacobian.  The standard
map is sufficient for the common $1/r$ and $1/r^2$ point kernels tested here, while generalized scaling and
Gauss--Jacobi quadrature recover rapid convergence for fractional powers.  The near-boundary point-source study shows
that this radial cancellation does not remove singular or nearly singular behavior that remains in the boundary-patch
parameters; a face-level $t$-transfer substantially improves those cases.  For line singularities, transverse scaling
provides the analogous cancellation in the plane normal to the line, and generalized or Gauss--Jacobi transverse rules
handle fractional transverse powers.  The near-boundary line study confirms the analogous limitation: when the line
approaches a boundary patch, a nearly singular patch factor remains even though the transverse volume singularity has
been removed.

The present work is distinct from earlier uses of SBC in two ways.  The planar SBC paper~\cite{Chin:2021:SBC} developed
the two-dimensional construction and its relation to homogeneous numerical integration, while recent work on the
VEM~\cite{Chin:2024:VEM} used SBC as a component in application-specific polygonal and polyhedral discretizations. Here,
the focus was on the integration framework itself: the arbitrary-dimensional boundary sweep, the three-dimensional
curved-boundary and affine-polyhedron specializations, the four-dimensional affine-polytope verification, and the point-
and line-singularity transformations.

These constructions also suggest a broader family of boundary-induced transformations.  The scaled-boundary rule used
here collapses all three coordinate directions to a point, while the line-singularity map preserves one coordinate and
collapses the remaining two directions to a line.  A complementary plane-based map would preserve two coordinates and
collapse only the normal direction to a reference plane.  These point-, line-, and plane-collapse maps lead to different
canonical cubature rules and different distributions of quadrature points in the physical domain.  The point-collapse
rule is natural for point singularities and oriented boundary representations, whereas line- and plane-collapse rules
may be advantageous for smooth nonpolynomial integrands or geometries with preferred directions because they can
distribute points more evenly through the domain.

Several extensions remain important.  Curved singular lines, surface singularities, trimmed NURBS patches, and implicit
boundary representations require additional geometric machinery.  Future work will also consider production
implementations of these rules for high-order embedded, contact, mortar, isogeometric, and polytopal methods, where
robust integration over curved and nonmatching regions is a recurring bottleneck.

\section*{Declaration of competing interest}
The authors declare that they have no known competing financial interests or personal relationships that could have
appeared to influence the work reported in this paper.

\section*{Declaration of generative AI and AI-assisted technologies in the manuscript preparation process}
During the preparation of this work, the authors used GPT 5.5 to edit an initial draft of the paper and expand some
sections of the manuscript based on detailed prompting from the authors.  After using this tool/service, the authors
reviewed and edited the content as needed and take full responsibility for the content of the published article.

\section*{Acknowledgements}
This work was performed under the auspices of the U.S. Department of Energy by Lawrence Livermore National Laboratory
under Contract DE-AC52-07NA27344.

\bibliography{journals,references}

@string{tog  	= {ACM Transactions on Graphics}}

@string{acme 	= {Archives of Computational Methods in Engineering}}

@string{bitnm	= {BIT Numerical Mathematics}}

@string{cmech = {Computational Mechanics}}

@string{cad  	= {Computer-Aided Design}}

@string{cagd 	= {Computer Aided Geometric Design}}

@string{cmame = {Computer Methods in Applied Mechanics and Engineering}}

@string{camwa = {Computers \& Mathematics with Applications}}

@string{eabe 	= {Engineering Analysis with Boundary Elements}}

@string{m2an 	= {ESAIM: Mathematical Modelling and Numerical Analysis}}

@string{ijnme = {International Journal for Numerical Methods in Engineering}}

@string{ijf  	= {International Journal of Fracture}}

@string{cam  	= {Journal of Computational and Applied Mathematics}}

@string{mmmas = {Mathematical Models and Methods in Applied Sciences}}

@string{pams 	= {Proceedings of the American Mathematical Society}}

@string{sinum = {SIAM Journal on Numerical Analysis}}

@string{sisc 	= {SIAM Journal on Scientific Computing}}

@article{Arioli:2019:SBP,
  author  = "Arioli, C. and Shamanskiy, A. and Klinkel, S. and Simeon, B.",
  title   = "Scaled boundary parametrizations in isogeometric analysis",
  journal = cmame,
  volume  = 349,
  pages   = "576--594",
  year    = 2019
}

@article{Antolin:2022:RNI,
  author  = {Antolin, P. and Wei, X. and Buffa, A.},
  title   = "Robust numerical integration on curved polyhedra based on folded decompositions",
  journal = cmame,
  volume  = 395,
  pages   = 114948,
  year    = 2022
}

@article{Artioli:2020:ACP,
  author  = "Artioli, E. and Sommariva, A. and Vianello, M.",
  title   = "Algebraic cubature on polygonal elements with a circular edge",
  journal = camwa,
  volume  = 79,
  number  = 7,
  pages   = "2057--2066",
  year    = 2020
}

@article{BeiraodaVeiga:2013:BPV,
  author  = "{Beir\~{a}o da Veiga}, L. and Brezzi, F. and Cangiani, A. and Manzini, G. and Marini, L. D. and Russo, A.",
  title   = "Basic principles of virtual element methods",
  journal = mmmas,
  volume  = 23,
  number  = 1,
  pages   = "199--214",
  year    = 2013
}

@article{BeiraodaVeiga:2019:TVE,
  author  = "{Beir\~{a}o da Veiga}, L. and Russo, A. and Vacca, G.",
  title   = "The virtual element method with curved edges",
  journal = m2an,
  volume  = 53,
  pages   = "375--404",
  year    = 2019
}

@article{Burman:2014:CDG,
  author  = "Burman, E. and Claus, S. and Hansbo, P. and Larson, M. G. and Massing, A.",
  title   = "{CutFEM}: {D}iscretizing geometry and partial differential equations",
  journal = ijnme,
  volume  = 104,
  number  = 7,
  pages   = "472--501",
  year    = 2015
}

@article{Cangiani:2014:HVD,
  author  = "Cangiani, A. and Georgoulis, E. H. and Houston, P.",
  title   = "$hp$-version discontinuous {G}alerkin methods on polygonal and polyhedral meshes",
  journal = mmmas,
  volume  = 24,
  number  = 10,
  pages   = "2009--2041",
  year    = 2014
}

@article{Chen:2016:ANB,
  author  = "Chen, L. and Simeon, B. and Klinkel, S.",
  title   = "A {NURBS} based {G}alerkin approach for the analysis of solids in boundary representation",
  journal = cmame,
  volume  = 305,
  pages   = "777--805",
  year    = 2016
}

@article{Chernov:2012:ECG,
  author  = "Chernov, A. and Schwab, C.",
  title   = "Exponential convergence of {G}auss-{J}acobi quadratures for singular integrals over simplices in arbitrary dimension",
  journal = sinum,
  volume  = 50,
  number  = 3,
  pages   = "1433--1455",
  year    = 2012
}

@article{Chin:2015:NIH,
  author  = "Chin, E. B. and Lasserre, J. B. and Sukumar, N.",
  title   = "Numerical integration of homogeneous functions on convex and nonconvex polygons and polyhedra",
  journal = cmech,
  volume  = 56,
  number  = 6,
  pages   = "967--981",
  year    = 2015
}

@article{Chin:2017:MCD,
  author  = "Chin, E. B. and Lasserre, J. B. and Sukumar, N.",
  title   = "Modeling crack discontinuities without element-partitioning in the extended finite element method",
  journal = ijnme,
  volume  = 110,
  number  = 11,
  pages   = "1021--1048",
  year    = 2017
}

@article{Chin:2020:AEM,
  author  = "Chin, E. B. and Sukumar, N.",
  title   = "An efficient method to integrate polynomials over polytopes and curved solids",
  journal = cagd,
  volume  = 82,
  pages   = 101914,
  year    = 2020
}

@article{Chin:2021:SBC,
  author  = "Chin, E. B. and Sukumar, N.",
  title   = "Scaled boundary cubature scheme for numerical integration over planar regions with affine and curved boundaries",
  journal = cmame,
  volume  = 380,
  pages   = 113796,
  year    = 2021
}

@article{Chin:2024:VEM,
  author  = "Chin, E. B. and Dassi, F. and Manzini, G. and Sukumar, N.",
  title   = "Numerical integration in the virtual element method with the scaled boundary cubature scheme",
  journal = ijnme,
  volume  = 125,
  number  = 20,
  pages   = "e7549",
  year    = 2024
}

@article{Duffy:1982:QOP,
  author  = "Duffy, M. G.",
  title   = "Quadrature over a pyramid or cube of integrands with a singularity at a vertex",
  journal = sinum,
  volume  = 19,
  number  = 6,
  pages   = "1260--1262",
  year    = 1982
}

@article{Duster:2020:SEM,
  author  = {D\"{u}ster, A. and Allix, O.},
  title   = "Selective enrichment of moment fitting and application to cut finite elements and cells",
  journal = cmech,
  volume  = 65,
  number  = 2,
  pages   = "429--450",
  year    = 2020
}

@article{Farah:2015:SBE,
  author  = {Farah, P. and Popp, A. and Wall, W. A.},
  title   = "Segment-based vs.\ element-based integration for mortar methods in computational contact mechanics",
  journal = cmech,
  volume  = 55,
  pages   = "209--228",
  year    = 2015
}

@article{Fries:2017:HOM,
  author  = {Fries, T. P. and Omerovi\'{c}, S. and Sch\"{o}llhammer, D. and Steidl, J.},
  title   = "Higher-order meshing of implicit geometries---{P}art {I}: {I}ntegration and interpolation in cut elements",
  journal = cmame,
  volume  = 313,
  pages   = "759--784",
  year    = 2017
}

@article{Garhuom:2022:NMF,
  author  = {Garhuom, W. and D\"{u}ster, A.},
  title   = "Non-negative moment fitting quadrature for cut finite elements and cells undergoing large deformations",
  journal = cmech,
  volume  = 70,
  number  = 5,
  pages   = "1059--1081",
  year    = 2022
}

@article{Guendelman:2003:NRB,
  author  = {Guendelman, E. and Bridson, R. and Fedkiw, R.},
  title   = "Nonconvex rigid bodies with stacking",
  journal = tog,
  volume  = 22,
  number  = 3,
  pages   = "871--878",
  year    = 2003
}

@article{Gunderman:2021:SMF,
  author  = {Gunderman, D. and Weiss, K. and Evans, J. A.},
  title   = {Spectral mesh-free quadrature for planar regions bounded by rational parametric curves},
  journal = cad,
  volume  = 130,
  pages   = 102944,
  year    = 2021
}

@article{Gunderman:2021:HAM,
  author  = {Gunderman, D. and Weiss, K. and Evans, J. A.},
  title   = "High-accuracy mesh-free quadrature for trimmed parametric surfaces and volumes",
  journal = cad,
  volume  = 141,
  pages   = 103093,
  year    = 2021
}

@article{Hesch:2011:TTD,
  author  = {Hesch, C. and Betsch, P.},
  title   = "Transient three-dimensional contact problems: mortar method.  {M}ixed methods and conserving integration",
  journal = cmech,
  volume  = 48,
  pages   = "461--475",
  year    = 2011
}

@article{Hughes:2005:IAC,
  author  = "Hughes, T. J. R. and Cottrell, J. A. and Bazilevs, Y.",
  title   = "Isogeometric analysis: {CAD}, finite elements, {NURBS}, exact geometry and mesh refinement",
  journal = cmame,
  volume  = 194,
  number  = "39--41",
  pages   = "4135--4195",
  year    = 2005
}

@article{Krishnamurthy:2011:AGA,
  author  = {Krishnamurthy, A. and McMains, S.},
  title   = "Accurate {GPU}-accelerated surface integrals for moment computation",
  journal = cad,
  volume  = 43,
  pages   = "1284--1295",
  year    = 2011
}

@article{Lasserre:1998:ICP,
  author  = {Lasserre, J. B.},
  title   = "Integration on a convex polytope",
  journal = pams,
  volume  = 126,
  number  = 8,
  pages   = "2433--2441",
  year    = 1998
}

@article{Lasserre:1999:IHF,
  author  = {Lasserre, J. B.},
  title   = "Integration and homogeneous functions",
  journal = pams,
  volume  = 127,
  number  = 3,
  pages   = "813--818",
  year    = 1999
}

@article{Loibl:2023:PQT,
  author  = {Loibl, M. and Leonetti, L. and Reali, A. and Kiendl, J.},
  title   = "Patch-wise quadrature of trimmed surfaces in isogeometric analysis",
  journal = cmame,
  volume  = 415,
  pages   = 116279,
  year    = 2023
}

@article{Lv:2019:ASD,
  author  = {Lv, J.-H. and Jiao, Y.-Y. and Feng, X.-T. and Wriggers, P. and Zhuang, X.-Y. and Rabczuk, T.},
  title   = "A series of {D}uffy-distance transformation for integrating 2{D} and 3{D} vertex singularities",
  journal = ijnme,
  volume  = 118,
  number  = 1,
  pages   = "38--60",
  year    = 2019
}

@article{Ma:2002:DTN,
  author  = {Ma, H. and Kamiya, N.},
  title   = "Distance transformation for the numerical evaluation of near singular boundary integrals with various kernels in boundary element method",
  journal = eabe,
  volume  = 26,
  pages   = "329--339",
  year    = 2002
}

@article{Marussig:2018:ICI,
  author  = {Marussig, B. and Hughes, T. J. R.},
  title   = "A review of trimming in isogeometric analysis: challenges, data exchange, and simulation aspects",
  journal = acme,
  volume  = 25,
  pages   = "1059--1127",
  year    = 2018
}

@article{Moes:1999:AFE,
  author  = {Mo\"{e}s, N. and Dolbow, J. and Belytschko, T.},
  title   = "A finite element method for crack growth without remeshing",
  journal = ijnme,
  volume  = 46,
  number  = 1,
  pages   = "131--150",
  year    = 1999
}

@article{Mousavi:2010:GDT,
  author  = {Mousavi, S. E. and Sukumar, N.},
  title   = "Generalized {D}uffy transformation for integrating vertex singularities",
  journal = cmech,
  volume  = 45,
  pages   = "127--140",
  year    = 2010
}

@article{Mousavi:2010:GGQ,
  author  = {Mousavi, S. E. and Xiao, H. and Sukumar, N.},
  title   = "Generalized {G}aussian quadrature rules on arbitrary polygons",
  journal = ijnme,
  volume  = 82,
  number  = 1,
  pages   = "99--113",
  year    = 2010
}

@article{Puso:2004:AMS,
  author  = {Puso, M. A. and Laursen, T. A.},
  title   = "A mortar segment-to-segment contact method for large deformation solid mechanics",
  journal = cmame,
  volume  = 193,
  number  = "6--8",
  pages   = "601--629",
  year    = 2004
}

@article{Saye:2015:HOQ,
  author  = {Saye, R. I.},
  title   = "High-order quadrature methods for implicitly defined surfaces and volumes in hyperrectangles",
  journal = sisc,
  volume  = 37,
  number  = 2,
  pages   = "A993--A1019",
  year    = 2015
}

@article{Schillinger:2015:TFC,
  author  = {Schillinger, D. and Ruess, M.},
  title   = "The Finite Cell Method: {A} Review in the Context of Higher-Order Structural Analysis of {CAD} and Image-Based Geometric Models",
  journal = acme,
  volume  = 22,
  pages   = "391--455",
  year    = 2015
}

@article{Scholz:2019:NIT,
  author  = {Scholz, F. and J\"{u}ttler, B.},
  title   = "Numerical integration on trimmed three-dimensional domains with implicitly defined trimming surfaces",
  journal = cmame,
  volume  = 357,
  pages   = 112577,
  year    = 2019
}

@article{Sevilla:2008:NEF,
  author  = {Sevilla, R. and Fern\'{a}ndez-M\'{e}ndez, S. and Huerta, A.},
  title   = "{NURBS}-enhanced finite element method ({NEFEM})",
  journal = ijnme,
  volume  = 76,
  pages   = "56--83",
  year    = 2008
}

@article{Sevilla:2011:TNE,
  author  = {Sevilla, R. and Fern\'{a}ndez-M\'{e}ndez, S. and Huerta, A.},
  title   = "3{D} {NURBS}-enhanced finite element method ({NEFEM})",
  journal = ijnme,
  volume  = 88,
  number  = 2,
  pages   = "103--125",
  year    = 2011
}

@article{Sommariva:2007:PGC,
  author  = {Sommariva, A. and Vianello, M.},
  title   = "Product {G}auss cubature over polygons based on {G}reen's integration formula",
  journal = bitnm,
  volume  = 47,
  pages   = "441--453",
  year    = 2007
}

@article{Sommariva:2009:GGC,
  author  = {Sommariva, A. and Vianello, M.},
  title   = "{Gauss-Green} cubature and moment computation over arbitrary geometries",
  journal = cam,
  volume  = 231,
  pages   = "886--896",
  year    = 2009
}

@article{Song:1997:TSB,
  author  = {Song, C. and Wolf, J. P.},
  title   = "The scaled boundary finite-element method---alias consistent infinitesimal finite-element cell method---for elastodynamics",
  journal = cmame,
  volume  = 147,
  number  = "3--4",
  pages   = "329--355",
  year    = 1997
}

@article{Sukumar:2015:EFE,
  author  = {Sukumar, N. and Dolbow, J. E. and Mo\"{e}s, N.},
  title   = "Extended finite element method in computational fracture mechanics: a retrospective examination",
  journal = ijf,
  volume  = 196,
  pages   = "189--206",
  year    = 2015
}

@article{Xiao:2009:ANA,
  author  = {Xiao, H. and Gimbutas, Z.},
  title   = "A numerical algorithm for the construction of efficient quadrature rules in two and higher dimensions",
  journal = camwa,
  volume  = 59,
  pages   = "663--676",
  year    = 2009
}

@article{Scholz:2021:UHO,
  author  = {Scholz, F. and J\"{u}ttler, B.},
  title   = "Using high-order transport theorems for implicitly defined moving curves to perform quadrature on planar domains",
  journal = sinum,
  volume  = 59,
  number  = 4,
  pages   = "2138--2162",
  year    = 2021
}

@article{Sommariva:2015:CMD,
  author  = {Sommariva, A. and Vianello, M.},
  title   = {Compression of multivariate discrete measures and applications},
  journal = {Numerical Functional Analysis and Optimization},
  volume  = 36,
  number  = 9,
  pages   = {1198--1223},
  year    = 2015
}

@article{Bauman:2020:CAC,
  author  = {Bauman, B. and Sommariva, A. and Vianello, M.},
  title   = {Compressed algebraic cubature over polygons with applications to optical design},
  journal = {Journal of Computational and Applied Mathematics},
  volume  = 370,
  pages   = 112658,
  year    = 2020
}

\end{document}